\newcommand{\lra}{\leftrightarrow}
\newcommand{\aff}{\mathrm{aff}}
\newcommand{\pcode}{\mathbf{P}_{\mathbf{code}}}
\crefname{figure}{Figure}{Figures}
\crefname{prop}{Proposition}{Propositions}
\theoremstyle{definition}
\newtheorem{thm}{Theorem}[section]   
\newtheorem{cor}[thm]{Corollary}     
\newtheorem{lem}[thm]{Lemma}         
\newtheorem{prop}[thm]{Proposition}  
\newtheorem{const}[thm]{Construction}  
\theoremstyle{definition} 
\newtheorem{defn}[thm]{Definition}   
\newtheorem*{conj*}{Conjecture}        
\newtheorem{ex}[thm]{Example}        
\newtheorem{question}[thm]{Question}
\DeclareMathOperator{\code}{code}
\DeclareMathOperator{\tk}{Tk}
\DeclareMathOperator{\odim}{odim}
\DeclareMathOperator{\cdim}{cdim}
\DeclareMathOperator{\conv}{conv}
\def\C{\mathbb C}
\def\R{\mathbb R}
\def\cC{\mathcal C}
\def\cD{\mathcal D}
\def\cL{\mathcal L}
\def\cP{\mathcal P}
\def\cR{\mathcal R}
\def\cT{\mathcal T}
\def\cU{\mathcal U}
\def\od{:=}
\renewcommand{\emptyset}{\varnothing}
\def\1{\overline{1}}
\def\2{\overline{2}}
\def\3{\overline{3}}
\def\4{\overline{4}}
\def\5{\overline{5}}
\title{Order-forcing in Neural Codes}
\author{R. Amzi Jeffs, Caitlin Lienkaemper, and Nora Youngs}
\begin{document}
\maketitle
\begin{abstract}
   Convex neural codes are subsets of the Boolean lattice that record the intersection patterns of convex sets in Euclidean space. Much work in recent years has focused on finding combinatorial criteria on codes that can be used to classify whether or not a code is convex. In this paper we introduce \emph{order-forcing}, a combinatorial tool which recognizes when certain regions in a realization of a code must appear along a line segment between other regions. We use order-forcing to construct novel examples of non-convex codes, and to expand existing families of examples. We also construct a family of codes which shows that a dimension bound of Cruz, Giusti, Itskov, and Kronholm (referred to as monotonicity of open convexity) is tight in all dimensions.
\end{abstract}
\section{Introduction}
A \emph{combinatorial neural code} or simply \emph{neural code} is a subset of the Boolean lattice $2^{[n]}$, where $[n]\od \{1,2,\ldots,n\}$. 
A neural code is called \emph{convex} if it records the intersection pattern of a collection of convex sets in $\R^d$. More specifically, a code $\cC\subseteq 2^{[n]}$ is \emph{open convex} if there exists a collection of convex open sets $\cU = \{U_1, \ldots, U_n\}$ in $\R^d$ such that 
\[\sigma\in \cC \, \Leftrightarrow \, \bigcap_{i\in \sigma} U_i \setminus \bigcup_{j\notin \sigma}U_j \neq \emptyset.\]
The region $\bigcap_{i\in\sigma} U_i\setminus \bigcup_{j\notin\sigma} U_j$ is called the \emph{atom} of $\sigma$ in $\cU$, and is denoted $A^\cU_\sigma$. The collection $\cU$ is called an \emph{open realization} of $\cC$, and the smallest dimension $d$ in which one can find an open realization is called the \emph{open embedding dimension} of $\cC$, denoted $\odim(\cC)$. Analogously, one may consider closed convex sets to obtain notions of \emph{closed convex} codes, \emph{closed realizations}, and \emph{closed embedding dimension}.

The study of convex codes is motivated by the problem in neuroscience of characterizing which patterns of neural activity can arise from neurons with approximately convex receptive fields, such as hippocampal place cells \cites{okeefe, reconstruction}. Further, the task of characterizing convex codes is  mathematically interesting in its own right. Explaining why certain codes are not convex has required the development of new and interesting theorems in discrete geometry \cites{sunflowers, embeddingphenomena}, and has connections to established mathematical theories, such as that of oriented matroids \cite{matroids}. There is a sizeable body of work on determining combinatorial criteria that can detect whether or not a code is convex (in addition to above cited works, see \cites{nogo, neuralring13, obstructions, openclosed, goldrup2020classification, morphisms, nomonotone, undecidability, sparse, CUR, chan2020nondegenerate}), but a complete characterization of convexity remains elusive.

In this paper, we introduce a combinatorial concept that we call \emph{order-forcing} (see Definition~\ref{def:orderforced}). Order-forcing provides an elementary connection between the combinatorics of a code and the geometric arrangement of atoms in its open or closed realizations, as described in Theorem \ref{thm:order-forcing} below.  
\begin{thm}\label{thm:order-forcing}
Let $\sigma_1, \sigma_2, \ldots, \sigma_k$ be an order-forced sequence of codewords in a code $\cC\subseteq 2^{[n]}$. Let $\cU = \{U_1, \ldots ,U_n\}$ be a (closed or open) convex realization of $\cC$, and let $x\in A^{\cU}_{\sigma_1}$, and $y\in A^{\cU}_{\sigma_k}$. Then the line segment $\overline{xy}$ must pass through the atoms of $\sigma_1, \sigma_2, \ldots, \sigma_k$, in this order.
\end{thm}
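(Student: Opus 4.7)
The plan is to translate the geometric content of the theorem into a combinatorial statement about the sequence of codewords visited along $\overline{xy}$, and then invoke the combinatorial definition of order-forcing.

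First, parameterize the segment as $p\colon [0,1]\to \R^d$ with $p(0)=x$ and $p(1)=y$, and for each $t$ set $\rho(t)=\{j\in[n]:p(t)\in U_j\}$. The key geometric input is that each $U_j$ is convex, so $p\inv(U_j)$ is a (possibly empty) subinterval of $[0,1]$. Thus, as $t$ runs from $0$ to $1$, each index $j$ can switch from ``out'' to ``in'' at most once, and back from ``in'' to ``out'' at most once. In particular, if we ignore the finite set $F\subset[0,1]$ of transition values (where at least one $U_j$-membership flips), then $\rho(t)$ is locally constant on $[0,1]\setminus F$ and equals the codeword of the atom containing $p(t)$.

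Next, read off the distinct values of $\rho$ on successive components of $[0,1]\setminus F$ to obtain a sequence $\rho_1,\rho_2,\ldots,\rho_m\in\cC$ with $\rho_1=\sigma_1$ and $\rho_m=\sigma_k$. The interval property of each $p\inv(U_j)$ translates directly into the combinatorial statement that for every $j\in[n]$, the set $\{i:j\in\rho_i\}$ is a (possibly empty) contiguous block in $\{1,\ldots,m\}$. At this point I would apply Definition~\ref{def:orderforced}, which we expect to be phrased precisely so that any such ``interval-compatible'' sequence in $\cC$ starting at $\sigma_1$ and ending at $\sigma_k$ must contain $\sigma_1,\sigma_2,\ldots,\sigma_k$ as an ordered subsequence. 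Pulling this subsequence back to the corresponding parameter values $0=t_1<t_2<\cdots<t_k=1$ with $p(t_i)\in A^\cU_{\sigma_i}$ proves the theorem.

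The main technical obstacle I anticipate is bookkeeping at the transition set $F$ and at the endpoints. At $t\in F$ the point $p(t)$ lies on the boundary of some $U_j$: in the open case it may lie in no atom of $\cC$ at all, and in the closed case it typically lies in several atoms simultaneously. I would handle both cases uniformly by working only on the connected components of $[0,1]\setminus F$, on which $\rho(t)$ is constant and is a genuine codeword of $\cC$, so the sequence $\rho_1,\ldots,\rho_m$ extracted above is well-defined in $\cC$ regardless of the open/closed distinction. A short additional check, or a generic perturbation of $x$ and $y$ within their atoms, deals with the boundary behavior at $t=0$ and $t=1$ and ensures that $\rho_1=\sigma_1$ and $\rho_m=\sigma_k$ as required.
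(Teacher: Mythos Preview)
Your overall strategy is exactly the paper's: read off the sequence of codewords visited along $\overline{xy}$ and then appeal to the definition of order-forcing. Your ``interval-compatible'' condition --- that $\{i:j\in\rho_i\}$ is a contiguous block for each neuron $j$ --- is precisely the paper's feasibility condition of Definition~\ref{def:feasible} (the two are equivalent by a one-line argument), and the paper packages this geometric step as Lemma~\ref{lem:convexfeasible}.

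The gap is that your guess at the definition is not what the paper actually uses. Order-forcing is defined via feasible \emph{paths in the codeword containment graph} $G_\cC$ (whose edges record containments $\sigma\subsetneq\tau$), not via arbitrary interval-compatible sequences of codewords. The hypothesis of the theorem therefore only tells you that every feasible path in $G_\cC$ visits $\sigma_1,\ldots,\sigma_k$ in order; to invoke it you must show that your sequence $\rho_1,\ldots,\rho_m$ is a walk in $G_\cC$, i.e.\ that consecutive atoms along $\overline{xy}$ are containment-comparable. Interval-compatibility alone does not give this (e.g.\ the sequence $\{1\},\{2\}$ is interval-compatible but not a walk in $G_\cC$). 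The paper supplies the missing step via Lemma~\ref{lem:edges}, which uses the open/closed hypothesis to show that adjacent atoms in a good-cover realization are always comparable. Note too that this step genuinely needs the atoms at your transition set $F$: if at some $t_0\in F$ one neuron turns off while another turns on, the atoms on the two neighboring components of $[0,1]\setminus F$ need not be comparable, though each is comparable to the atom at $t_0$ itself. So rather than discarding $F$, record the full sequence of atoms along the segment; Lemma~\ref{lem:edges} then makes it a feasible walk in $G_\cC$, which contains a feasible path, and the actual definition of order-forced applies directly.
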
 
We prove Theorem \ref{thm:order-forcing} in Section \ref{sec:order-forcing}, and also provide several basic examples of order-forcing. 

 Another important class of codes is those which can be realized using a good covers. A \emph{good cover} is a collection of sets $\{U_1,\ldots, U_n\}$ in $\R^d$ (all open or all closed) such that $\bigcap_{i\in\sigma} U_i$ is either empty or contractible for all nonempty $\sigma\subseteq [n]$. Every convex realization is also a good cover, but not every good cover code is convex. Previous examples of non-convex good cover codes have required technical geometric results to describe (see \cites{obstructions, sunflowers, undecidability, CUR}). In Section~\ref{sec:of_examples} we use order-forcing to describe new good cover codes that are not convex:
\begin{itemize}
    \item We generalize a minimally non-convex code from \cite{morphisms} based on sunflowers of convex open sets to a family $\{\cL_n\mid n\ge 0\}$ of minimally non-convex  good cover codes (Proposition \ref{prop:stretch_sun}).
    \item We build a good cover code $\cR$ that is neither open nor closed convex by using order-forcing to guarantee that two disjoint sets would cross one another in a convex realization of $\cR$ (Proposition \ref{prop:rowboat}). This example is notable in that it relies on the order that codewords appear along line segments, rather than just certain codewords being ``between" one another.
    \item We build a good cover code $\cT$ that is neither open nor closed convex by using order-forcing to guarantee a non-convex ``twisting" in every realization of $\cT$ (Proposition \ref{prop:braid}).

\end{itemize}

These examples illustrate the utility of order-forcing. The codes $\cT$ and $\cR$ have the advantage that they require only elementary geometric techniques (i.e. order-forcing) to analyze. The codes $\cT$ and $\cR$ are also the first ``natural" examples we know of of good cover codes which are neither open nor closed convex (in the sense that they are not the disjoint union of a code which is open but not closed convex and a code which is closed but open convex).

In Section 4 we build on results of \cite{openclosed}, using tools from \cite{sunflowers} to prove that adding a non-maximal codeword to a code may increase its open embedding dimension, no matter what value the open embedding dimension has. More precisely, we describe for every $d\ge 1$ a code $\cP_d$ such that $\odim(\cP_d) = d$ and $\odim(\cP_d\cup \{\sigma\}) = d+1$ for some new non-maximal codeword $\sigma$. This shows that a monotonicity bound described in \cite{openclosed} is tight for the family of codes $\{\cP_d\mid d\ge 1\}$. The techniques used in Section \ref{sec:strict_mono} hint at potential generalizations of order-forcing, which we discuss in Section \ref{sec:conclusion} along with other open questions.

\section{Order-Forcing \label{sec:order-forcing}}

When we constrain ourselves to realizations that use only open (or only closed) convex regions $\{U_i\}$, we restrict not only which codes may be realized, but how regions in these realizations can be arranged. 
In particular, when we move along continuous paths through realizations composed of open (or closed) sets $U_i$, we are limited in the transitions we can make from one atom to the next.  

\begin{lem}\label{lem:edges} Suppose $\cC\subseteq 2^{[n]}$ is a neural code with a good cover realization $\cU$, and let $\sigma$ and $\tau$ be codewords of $\cC$. If there are points $p_\sigma\in A^\cU_\sigma$ and $p_\tau \in A^\cU_\tau$ and a continuous path from $p_\sigma$ to $p_\tau$ that is contained in $A^\cU_\sigma\cup A^\cU_\tau$ (that is, if the atoms are adjacent in the realization), then either $\sigma\subseteq\tau$ or $\tau\subseteq\sigma$.
\end{lem}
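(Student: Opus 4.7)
The plan is to argue by contradiction using the connectedness of $[0,1]$. Assume neither containment holds: pick $i \in \sigma \setminus \tau$ and $j \in \tau \setminus \sigma$, and let $\gamma \colon [0,1]\to A^\cU_\sigma \cup A^\cU_\tau$ be a continuous path with $\gamma(0)=p_\sigma$ and $\gamma(1)=p_\tau$.

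The key observation I would exploit is that the indices $i$ and $j$ behave oppositely on the two atoms. For any $t\in[0,1]$, either $\gamma(t)\in A^\cU_\sigma$, in which case $\gamma(t)\in U_i$ and $\gamma(t)\notin U_j$ (since $j\notin\sigma$), or $\gamma(t)\in A^\cU_\tau$, in which case $\gamma(t)\in U_j$ and $\gamma(t)\notin U_i$ (since $i\notin\tau$). Consequently the preimages
\[
A \;=\; \gamma^{-1}(U_i), \qquad B \;=\; \gamma^{-1}(U_j)
\]
are disjoint and cover $[0,1]$. Moreover $0\in A$ and $1\in B$, so both are nonempty.

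Now I would split into the two cases allowed by the hypothesis that $\cU$ is a good cover (all sets open or all sets closed). If the $U_k$ are all open, then $A$ and $B$ are both open in $[0,1]$ by continuity of $\gamma$, giving a separation of the connected space $[0,1]$ — contradiction. If the $U_k$ are all closed, then $A$ and $B$ are both closed in $[0,1]$, again yielding a separation and hence a contradiction. Either way, one of $\sigma\setminus\tau$ or $\tau\setminus\sigma$ must be empty, which is the desired conclusion.

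I do not expect any real obstacle here; the only point that requires a moment of care is verifying that $A$ and $B$ truly partition $[0,1]$, which uses the atom definition directly (membership in $U_i$ is forced by $i\in\sigma$ and forbidden by $i\notin\tau$, and symmetrically for $j$). The contractibility part of the good cover condition is not needed — only that all $U_k$ share the same topological type so that the preimages are simultaneously open or simultaneously closed.
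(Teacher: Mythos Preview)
Your argument is correct and follows essentially the same approach as the paper: pick $i\in\sigma\setminus\tau$ and $j\in\tau\setminus\sigma$, observe that $U_i$ and $U_j$ partition the path, and use that both are open (or both closed) to contradict connectedness. The only cosmetic difference is that the paper works with the image $P$ of the path rather than pulling back to $[0,1]$, and your remark that the contractibility part of the good cover hypothesis is unused is accurate.
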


\begin{proof} Let $P$ be the image of a continuous path from $p_\sigma$ to $p_\tau$ with $P\subseteq A^\cU_\sigma\cup A^\cU_\tau$. Suppose for contradiction that $\sigma\not\subset\tau$ and $\tau\not\subset \sigma$. Then there exist elements $i\in \sigma\setminus \tau$ and $j\in \tau\setminus\sigma$. But then $U_i\cap P$ and $U_j\cap P$ partition $P$ (every point in $P$ is in exactly one of $A_\sigma^\cU$ or $A_\tau^\cU$ and thus in exactly one of $U_i$ or $U_j$). Since our good cover consists of sets that are all open or all closed, the sets $U_i\cap P$ and $U_j\cap P$ are both relatively open or both relatively closed in $P$. This contradicts the fact that $P$ is connected, so $\sigma\subseteq\tau$ or $\tau\subseteq \sigma$ as desired.
\end{proof}

Thus, as we move continuously through any good cover realization of a code, we are moving along  edges in the following graph $G_\cC$:

\begin{defn}\label{def:codewordgraph}
Let $\cC\subseteq 2^{[n]}$ be a neural code. The \emph{codeword containment graph} of $\cC$ is the graph $G_{\cC}$ whose vertices are codewords of $\cC$, with edges $\{\sigma,\tau\}$ when either $\sigma\subsetneq\tau$ or $\tau\subsetneq \sigma$. Note that this graph is also defined in \cite{chan2020nondegenerate}. 
\end{defn}

\begin{ex}\label{ex:manypaths}
Consider the code $\cC = \{{\bf 1235}, {\bf 1245}, {\bf 1256}, 125, 13, 14, 15, \emptyset\}$. The graph $G_\cC$ for this code is shown in Figure \ref{fig:manypaths}.

\begin{center}
\begin{figure}[h!]
\includegraphics[width=2in]{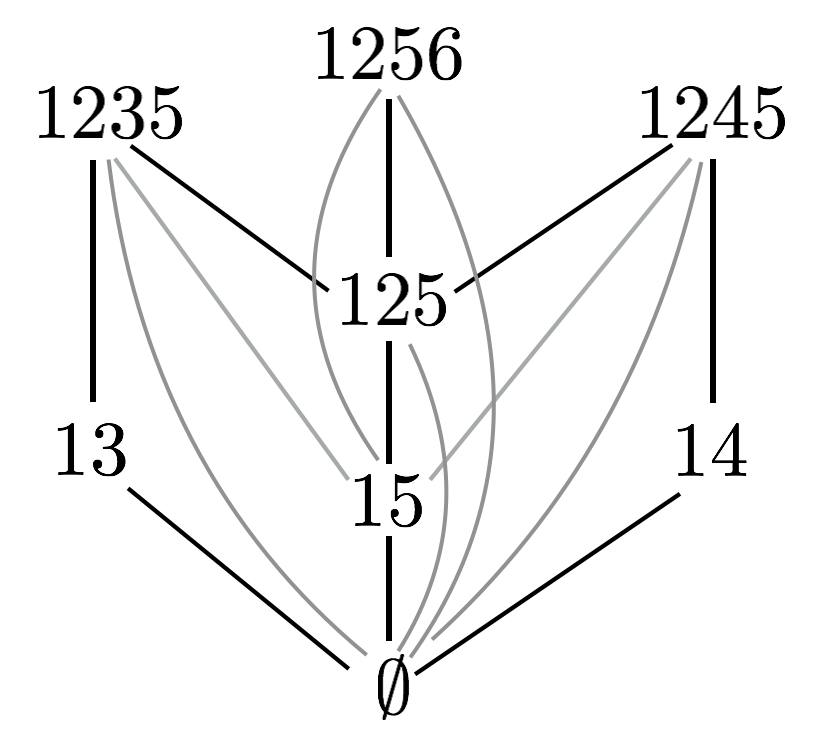}
\caption[something]{The codeword containment graph $G_\cC$ for the code in Example \ref{ex:manypaths}. \label{fig:manypaths}}
\end{figure} 
\end{center}
\end{ex}

Lemma \ref{lem:edges} implies that any continuous path from one codeword region to another codeword region in an open (or, closed) realization of the code $\cC$ must correspond to a walk in the graph $G_\cC$. For straight-line paths within a convex realization, this walk must respect convexity, a property we call {\it feasibility} (see Lemma \ref{lem:convexfeasible}). 
    
\begin{defn}\label{def:feasible}
Let $\cC$ be a neural code and $G_\cC$ its codeword containment graph. A $\sigma,\tau$ walk  $\sigma=v_1,v_2,...,v_k=\tau$ in $G_\cC$ is called \emph{feasible} if $v_i\cap v_j\subseteq v_m$ for all  $1\le i<m<j\le k$.
\end{defn}

In general, if there exists a feasible $\sigma,\tau$ walk, then by removing portions of the walk between repeated vertices, we can obtain a feasible $\sigma, \tau$ path.  This does not, however, mean that there is a corresponding straight line path in the realization which would follow precisely this sequence of codewords. For example, one could form a closed realization of the code $\{\mathbf{1},\mathbf{2},\mathbf{3},\emptyset\}$ in which $U_2$ is a hyperplane, and $U_1$ and $U_3$ are contained in its positive and negative side respectively. Then any straight line from the atom of $1$ to the atom of $3$ must pass through the atom of 2, but the path $1, \emptyset, 3$ is a feasible path in $G_{\C}$ regardless.

\begin{ex}[Example \ref{ex:manypaths} continued] Consider the codewords $\sigma = 13$ and $\tau = 14$ in $\cC$ from the code in Example \ref{ex:manypaths}. There are many $\sigma,\tau$ walks; however, not all are feasible. For example, the walk $13, 1235, 15, 1245, 14$ would not be feasible; however, the walk $13, 1235, 125, 1275, 125, 1245$ is a feasible $\sigma,\tau$ walk. This walk contains the feasible path $13, 1235, 125, 1245, 14$, which in this case is the unique  feasible $\sigma,\tau$ path. 

\end{ex}

\begin{lem}\label{lem:convexfeasible} Suppose $\cC\subseteq 2^{[n]}$ is a neural code with a convex realization $\cU = \{U_1,\ldots, U_n\}$, and let $\sigma$ and $\tau$ be codewords of $\cC$. If there are points $p_\sigma\in A^\cU_\sigma$ and $p_\tau \in A^\cU_\tau$, then the sequence of atoms along the line $\overline{p_\sigma p_\tau}$ forms a feasible  walk in $G_\cC$.
\end{lem}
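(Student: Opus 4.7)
My plan is to parameterize the segment by $\gamma(t) = (1-t)p_\sigma + t p_\tau$ for $t \in [0,1]$, and for each $t$ let $\omega(t) = \{i : \gamma(t) \in U_i\}$ be the codeword of the atom of $\cU$ containing $\gamma(t)$. The first observation is that for each $i \in [n]$, the preimage $\gamma^{-1}(U_i) \subseteq [0,1]$ is convex (since $U_i$ is convex and $\gamma$ is affine), so it is an interval. There are only $n$ such intervals, so their $2n$ endpoints partition $[0,1]$ into finitely many pieces on each of which $\omega$ is constant. Hence $\omega$ takes finitely many codeword values as $t$ runs from $0$ to $1$, and listing these in the order visited produces a finite sequence $v_1 = \sigma, v_2, \ldots, v_k = \tau$ in $\cC$.

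Next I would verify that this sequence is an honest walk in $G_\cC$, i.e., that consecutive codewords are comparable under inclusion. Between any two consecutive codewords $v_i, v_{i+1}$ in the sequence, there is a single transition $t_0$; for indices $\ell \in v_i \setminus v_{i+1}$ the interval $\gamma^{-1}(U_\ell)$ ends at $t_0$ and for $\ell \in v_{i+1} \setminus v_i$ it begins at $t_0$. Using that each $\gamma^{-1}(U_i)$ is an \emph{open} (resp. closed) interval in $[0,1]$ when the realization is open (resp. closed), a short case analysis shows $\omega(t_0) = v_i \cap v_{i+1}$ in the open case and $\omega(t_0) \supseteq v_i \cup v_{i+1}$ in the closed case. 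Either way, if $\omega(t_0) \not\in \{v_i, v_{i+1}\}$ we include it as an intermediate term of the walk, and every consecutive pair is then comparable by construction; if $\omega(t_0)$ equals one of them, the other neighbor in the walk is comparable to it directly. In all cases consecutive entries form an edge of $G_\cC$.

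Finally, for feasibility, suppose $1 \le i < m < j \le k$ and $\ell \in v_i \cap v_j$. Choose parameters $t_i < t_m < t_j$ with $\gamma(t_i) \in A^\cU_{v_i}$, $\gamma(t_m) \in A^\cU_{v_m}$, and $\gamma(t_j) \in A^\cU_{v_j}$. Then $\gamma(t_i)$ and $\gamma(t_j)$ both lie in $U_\ell$, so by convexity of $U_\ell$ the entire subsegment from $\gamma(t_i)$ to $\gamma(t_j)$ lies in $U_\ell$; in particular $\gamma(t_m) \in U_\ell$, giving $\ell \in v_m$. Thus $v_i \cap v_j \subseteq v_m$, which is exactly the feasibility condition of Definition~\ref{def:feasible}. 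The main obstacle I expect is the walk-property step: the naive hope that consecutive $v_i, v_{i+1}$ are adjacent atoms to which Lemma~\ref{lem:edges} applies directly can fail at a transition point where $\omega(t_0)$ differs from both neighbors, so the open-versus-closed case analysis of what happens at the endpoint of $\gamma^{-1}(U_i)$ is essential.
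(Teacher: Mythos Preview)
Your argument is correct and your feasibility step is exactly the paper's: pick points in the three atoms, use convexity of $U_{v_i\cap v_j}$ to conclude the middle point lies in it, hence $v_i\cap v_j\subseteq v_m$.

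Where you diverge is the walk-property step. The paper simply invokes Lemma~\ref{lem:edges}: once the sequence of atoms $\tau_1,\ldots,\tau_\ell$ along $\overline{p_\sigma p_\tau}$ is written down, any two \emph{consecutive} atoms $\tau_i,\tau_{i+1}$ are by definition joined by a sub-segment lying entirely in $A^\cU_{\tau_i}\cup A^\cU_{\tau_{i+1}}$, and Lemma~\ref{lem:edges} then gives $\tau_i\subseteq\tau_{i+1}$ or $\tau_{i+1}\subseteq\tau_i$. Your concern that this ``can fail at a transition point where $\omega(t_0)$ differs from both neighbors'' is misplaced: if $\omega(t_0)\notin\{v_i,v_{i+1}\}$ then $\omega(t_0)$ is itself one of the atoms in the sequence, so $v_i$ and $v_{i+1}$ were never consecutive to begin with. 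With the sequence defined as all values of $\omega$ in order (duplicates collapsed), Lemma~\ref{lem:edges} applies directly to every consecutive pair, and your open-versus-closed endpoint analysis is not needed. That analysis is correct---it essentially re-proves Lemma~\ref{lem:edges} in the special case of a straight segment---but it adds length and the expository wrinkle of ``including $\omega(t_0)$ as an intermediate term'' in a sequence that should already contain it.
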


\begin{proof}   Select points $p_\sigma\in A^\cU_\sigma$ and $p_\tau\in A^\cU_\tau$, and let the sequence of atoms along the line be given by $\sigma=\tau_1,\tau_2,...,\tau_\ell = \tau$. 
By Lemma \ref{lem:edges}, if we cross directly from $A_{\tau_i}^\cU$ to $A_{\tau_{i+1}}^\cU$ along the path $\overline{xy}$, then either $\tau_i\subseteq \tau_{i+1}$ or $\tau_{i+1}\subseteq \tau_i$, thus $(\tau_i, \tau_{i+1})$ is an edge of $G_\cC$. 
Thus, $\tau_1, \ldots, \tau_\ell$ is a walk in $G_\cC$. 
To check feasibility, we need to show that for all $i \leq j \leq k$,  $\tau_i \cap\tau_k  \subseteq \tau_j$. 
We can choose points $x_i, x_j$, and $x_j$ in this order along $\overline{p_\sigma p_\tau}$ such that $x_i \in A_{\tau_i}^\cU$, $x_j \in A_{\tau_j}^\cU$, $x_k \in A_{\tau_k}^\cU$. 
Since $\{U_1, \ldots, U_n\}$ is a convex realization and intersections of convex sets are convex, $U_{\tau_i\cap \tau_k}$ is a convex set.
By the definition of a convex set, the line segment $\overline{x_i x_k}$ is contained in $U_{\tau_i\cap \tau_k}$. 
Thus, $x_j\in U_{\tau_i\cap \tau_k}$, so $\tau_i \cap \tau_k \subseteq \tau_j$. Thus, $\tau_1, \ldots, \tau_\ell$ is a feasible walk. 

\end{proof}

The idea of feasibility gives us a new tool for finding possible obstructions to convexity. In any convex realization of a code, straight line paths between points in the same set $U_i$ must correspond to feasible walks in the graph, and so codes where feasible walks are rare or nonexistent can force us into contradictions. To that end, we define a few particular restrictions we will encounter.

\begin{defn}\label{def:forcedbetween} Let $\cC$ be a neural code and $G_\cC$ its codeword containment graph. We say a vertex $v$ of $G_\cC$ is \emph{forced between} vertices $\sigma$ and $\tau$ if every feasible $\sigma,\tau$ path passes through $v$. 
\end{defn}

\begin{ex}[Example \ref{ex:manypaths} continued]
In the codeword containment graph $G_\cC$, we see that $1245$ is forced between $14$ and $15$. There are many possible feasible paths from $14$ to $15$ (for example (14, 1245, 15) or (14, 1245, 125, 15) or (14, 1245,125, 1235, 15), but all these paths must use $1245$. 
\end{ex}

In cases where there are multiple codewords forced between two vertices of our graph, we often find that these vertices are also forced into a particular order, a situation we call {\it order-forcing}.

\begin{defn} Let $\cC$ be a neural code and $G_\cC$ its corresponding graph. A sequence of codewords $\sigma_1,...,\sigma_k$ is \emph{order-forced} if every feasible $\sigma_1,\sigma_k$ path contains these codewords as a subsequence. 
\end{defn}





\begin{defn}\label{def:orderforced} Let $\cC$ be a neural code and  $G_\cC$ its corresponding graph.  A feasible path $\sigma_1,...,\sigma_k$ is \emph{strongly order-forced} if $\sigma_1,...,\sigma_k$ is the unique feasible $\sigma_1,\sigma_k$ walk in  $G_\cC$. 
\end{defn} 



\begin{ex}
Consider the code $$\cC = \{{\bf 2456},{\bf 123},{\bf 145},{\bf 437}, {\bf467},45,46,47,1,2,3,\emptyset\}$$

In this code, the sequence $145,45,2456,46,467,47,473$ is strongly order-forced.  In order to have a path in $G_\cC$ from $145$ to $437$ which is feasible, we can certainly only use codewords which contain $4$. If we restrict to the portion of  $G_\cC$ which contains $4$, we have that this subgraph is a path with endpoints $145$ and $437$. Thus, there is a unique path from $145$ to $437$, and we can check that this path is feasible. 

\end{ex}



\begin{proof}[Proof of Theorem \ref{thm:order-forcing}]
Let $\sigma_1, \ldots, \sigma_k$ be an order-forced sequence in a code $\cC.$ Let $\cU = \{U_1, \ldots, U_n\}$ be a (closed or open) convex realization of $\cC$. 
Let $x\in A_{\sigma_1}^{\cU}$ and $y\in A_{\sigma_k}^\cU,$ and let $\overline{xy}$ be the line segment from $x$ to $y$. Let $\tau_1 = \sigma_1, \tau_2, \ldots, \tau_\ell = \sigma_k$ be the sequence of atoms along $\overline{xy}$.
By Lemma \ref{lem:convexfeasible}, we have that $\tau_1, \ldots, \tau_\ell$ is a feasible  walk from $\sigma_1$ to $\sigma_k$ in $G_\cC.$
Since every feasible walk from $\sigma_1$ to $\sigma_k$ contains a feasible path from $\sigma_1$ to $\sigma_k$, and every feasible path from $\sigma_1$ to $\sigma_k$ contains  $\sigma_1, \sigma_2, \ldots, \sigma_k$ as a subsequence, this suffices to prove Theorem \ref{thm:order-forcing}.

\end{proof}

 The situation where a codeword $v$ is forced between $\sigma$ and $\tau$ is a special case of order-forcing, and in this case we obtain the following result.  Once we know that a sequence is order-forced in a code $\cC$, we are often able to obtain several instances of order-forcing.
\begin{cor}\label{cor:forcedbetween}  
 Let $\cC\subseteq 2^{[n]}$ and suppose $\cU = \{U_1, \ldots ,U_n\}$ is a (closed or open) convex realization of a code $\cC$. If  $v\in \cC$ is forced between $\sigma$ and $\tau$, then for any $x\in A^{\cU}_{\sigma}$, and $y\in A^{\cU}_{\tau}$, the line segment $\overline{xy}$ must pass through the atom of $v$. 
\end{cor}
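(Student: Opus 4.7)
The plan is to reduce this corollary directly to Theorem \ref{thm:order-forcing} by recognizing that the hypothesis makes the three-term sequence $\sigma, v, \tau$ an order-forced sequence.

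First, I would unpack Definition \ref{def:forcedbetween}: the assumption that $v$ is forced between $\sigma$ and $\tau$ means that every feasible $\sigma, \tau$ path in $G_\cC$ passes through $v$. Because any such path begins at $\sigma$ and ends at $\tau$, the vertex $v$ must occur strictly between the endpoints, so the three-term sequence $\sigma, v, \tau$ appears as a subsequence of every feasible $\sigma, \tau$ path. By the definition of an order-forced sequence (stated just before Definition \ref{def:orderforced}), this is precisely the statement that $\sigma, v, \tau$ is order-forced.

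I would then invoke Theorem \ref{thm:order-forcing} with $\sigma_1 = \sigma$, $\sigma_2 = v$, $\sigma_3 = \tau$: for any $x \in A^{\cU}_\sigma$ and $y \in A^{\cU}_\tau$, the line segment $\overline{xy}$ must pass through the atoms of $\sigma$, $v$, and $\tau$ in that order, and in particular through the atom of $v$, which is exactly the conclusion sought.

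There is no real obstacle here; the corollary is essentially a specialization of Theorem \ref{thm:order-forcing} to the case $k = 3$, and the only content beyond that theorem is the observation that the ``forced between'' hypothesis is a restatement of being an order-forced sequence of length three. The same argument works uniformly for open and closed convex realizations, since Theorem \ref{thm:order-forcing} already handles both cases.
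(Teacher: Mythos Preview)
Your proposal is correct and matches the paper's approach exactly: the paper states just before the corollary that ``the situation where a codeword $v$ is forced between $\sigma$ and $\tau$ is a special case of order-forcing,'' and leaves the corollary unproved because it is the $k=3$ instance of Theorem~\ref{thm:order-forcing}, which is precisely the reduction you carry out.
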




In the following example, we illustrate the value of these ideas by showing a proof that a relatively small code is open-convex but not closed-convex.

 \begin{ex}\label{ex:wheel} In \cite{openclosed}, the authors provide an example of a code which is open convex, but not closed convex. This example (in particular the proof that it has no closed convex realization) is an instance of order-forcing, though it was not described by that name in \cite{openclosed}. A slightly smaller example of a similar code which is closed convex, but not open convex appears as code C15 in \cite{goldrup2020classification}. In this example, we give a proof of this result which resembles the proof in \cite[Lemma 2.9]{openclosed}, but is written to make the use of order-forcing explicit. 

 The code $$\cC=\{{\bf 123},{\bf 234},{\bf 345},{\bf 145},{\bf 125},12,23,34,45,15,\emptyset\}$$
has an open convex realization, but does not have a closed convex realization.\\

To see that an open convex realization exists, we exhibit one in Figure \ref{fig:5circle}. 
\begin{center}
\begin{figure}[ht!]
\includegraphics[width=1.5in]{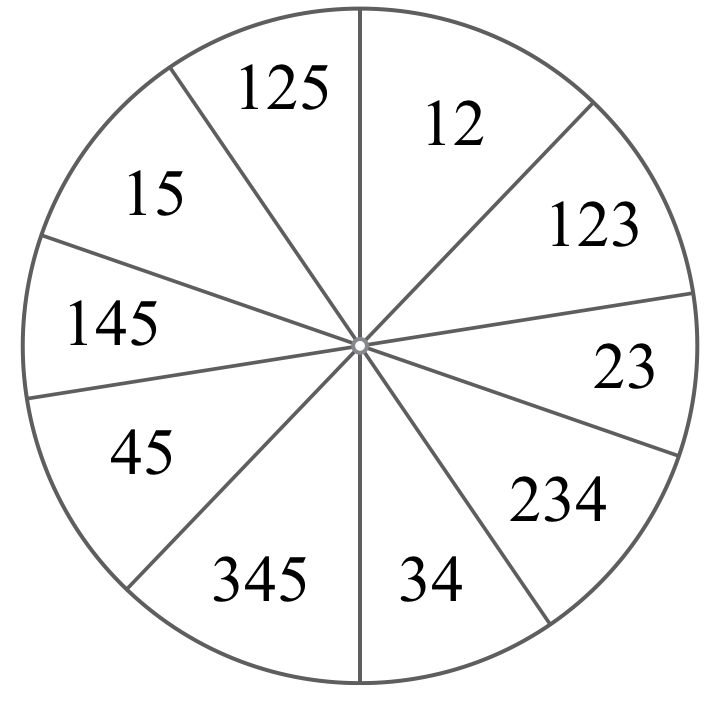}
\caption{A realization of $\cC$ using open sets. The regions $U_1,...,U_5$ are open half discs. }\label{fig:5circle}
\end{figure} 
\end{center}

To show that no closed convex realization may exist, we proceed by contradiction.  Suppose that for some $d\geq 1$, there exists a closed convex realization $\{U_1,\ldots, U_5\}$ of $\cC$ in $\R^d$.  Select points $p_{125}\in U_{125}$ and $p_{345}\in U_{345}$. 
Since both points are within the convex set $U_5$, the line segment $L_1$ from $p_{125}$ to $p_{345}$ is contained within $U_5$. Thus, it cannot pass through $U_{123}$. Note that $U_{123}$ is a closed set, as $123$ is a maximal codeword. Pick a point $p_{123}\in U_{123}$ which minimizes the distance to the set $L_1$; this is possible as these sets are disjoint and $L_1$ is compact. 

Now, consider the line segment $L_2$ from $p_{125}$ to $p_{123}$; note that $L_2\subset U_{12}$. In this code, $12$ is forced between $125$ and $123$, so by Corollary \ref{cor:forcedbetween} there exists a point $p_{12}$, between $p_{125}$ and $p_{123}$ along this line, which is in $A^{\mathcal U}_{12}$. Likewise, if we consider the line segment $L_3$ from $p_{123}$ to $p_{345}$, the order-forced sequence $123,23,234,34,345$ implies there is a point $p_{234}\in U_{234}$ on $L_3$ which is between $p_{123}$ and $p_{345}$. 

Finally, consider the line segment $L_4$ between $p_{12}$ and $p_{234}$. $L_4$ must pass through $U_{123}$ somewhere between these points because $123$ is forced between $12$ and ${234}$. Select a point $q_{123}$ on this line and within $U_{123}$; then, $q_{123}$ will be closer to $L_1$ than $p_{123}$, a contradiction. 
\end{ex}

\section{New Examples of Non-Convex Codes \label{sec:of_examples}}
In this section, we demonstrate the power of order-forcing by using order-forcing to construct a  new infinite family of minimally non-convex codes and two new non-convex codes. 

\subsection{Stretching sunflowers}\label{sec:stretch}

Early examples of good cover codes which are not convex come from a result about sunflowers of convex open sets,  described further in Section \ref{sec:strict_mono}.  The $d = 2$ case of this theorem was used as a lemma to give the first example of a non-convex good cover code in \cite[Theorem 3.1]{obstructions}. 
In this section, we give a new infinite family of non-convex codes generalizing this code.
In order to produce further examples of non-convex codes, we need a notion of what it means for a new code to be genuinely different from an old one. 
For instance, it is easy to produce ``new" non-convex codes by relabeling neurons, or by adding more neurons in some trivial way. 

A notion of minors for codes gives a partial answer to this problem. Minors were introduced in \cite{morphisms} via a partial order on all (appropriate equivalence classes of) codes, though were only called minors beginning in \cite{embeddingphenomena}. This framework has been further developed in \cites{sunflowers,matroids}. We define minors and related notions below.  

\begin{defn}\label{def:trunk}\label{def:morphism}
Let $\cC\subseteq 2^{[n]}$ be a code. A \emph{trunk} in $\cC$ is a set of the form \[
\tk_{\cC}(\sigma) \od \{c\in\cC \mid \sigma\subseteq c\},
\]
or the empty set. A \emph{morphism} is a function between codes such that the preimage of a trunk is again a trunk.
\end{defn}
\begin{defn}
A code $\cC$ is a \emph{minor} of $\cD$, written $\cC \leq \cD$, if there is a surjective morphism $f:\cD\to \cC$.
\end{defn}

The poset of all codes partially ordered via minors is denoted $\pcode$. The main relevance of minors in our context is the following. (See \cite[Section 9]{embeddingphenomena} for details.)

\begin{prop}
Suppose that $\cD$ is a minor of $\cC$. Then $\odim(\cD)\le \odim(\cC)$, and $\cdim(\cD)\le \cdim(\cC)$. 
\end{prop}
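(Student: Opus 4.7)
The plan is to convert an optimal convex realization of $\cC$ into a convex realization of $\cD$ in the same Euclidean space, using the surjective morphism $f:\cC\to\cD$ supplied by the assumption $\cD\le\cC$. The argument is identical in the open and closed settings, so I would focus on the open case and obtain the closed statement by replacing ``open'' with ``closed'' throughout.

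First I would fix $d=\odim(\cC)$ together with an open convex realization $\cU=\{U_1,\dots,U_n\}$ of $\cC$ in $\R^d$. Writing $\cD\subseteq 2^{[m]}$, the defining property of a morphism gives, for each $i\in[m]$, that $f^{-1}(\tk_\cD(i))$ is a trunk of $\cC$; so either $f^{-1}(\tk_\cD(i))=\tk_\cC(\sigma_i)$ for some chosen $\sigma_i\subseteq[n]$, or $f^{-1}(\tk_\cD(i))=\emptyset$. I would then define a candidate realization $\cV=\{V_1,\dots,V_m\}$ in $\R^d$ by setting
\[
V_i\od\bigcap_{j\in\sigma_i} U_j \quad\text{in the first case,}\qquad V_i\od\emptyset\quad\text{in the second,}
\]
with the convention that an empty intersection equals $\R^d$. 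Each $V_i$ is a finite intersection of open convex sets, hence itself open and convex, and the ambient dimension is unchanged.

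The main step of the verification is to show that for every $p\in\R^d$ the codeword $c_\cV(p)\od\{i:p\in V_i\}$ equals $f(c_\cU(p))$, where $c_\cU(p)\od\{j:p\in U_j\}$. This follows from the chain of equivalences
\[
i\in c_\cV(p)\iff \sigma_i\subseteq c_\cU(p)\iff c_\cU(p)\in \tk_\cC(\sigma_i)=f^{-1}(\tk_\cD(i))\iff i\in f(c_\cU(p)),
\]
with the empty-preimage case handled symmetrically (both $p\in V_i$ and $i\in f(c_\cU(p))$ fail for every $p$). Because $\cU$ realizes $\cC$, the set of codewords produced by $\cV$ equals $f(\cC)$, which is $\cD$ by surjectivity of $f$. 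Hence $\cV$ realizes $\cD$ in $\R^d$, giving $\odim(\cD)\le d=\odim(\cC)$.

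I do not expect a serious geometric obstacle: once the morphism is unpacked through its trunk preimages, the construction is essentially a pullback and the codeword computation is bookkeeping. The only care needed is tracking the degenerate cases ($\sigma_i=\emptyset$ yielding $V_i=\R^d$, and empty trunk preimage yielding $V_i=\emptyset$) and observing that openness, closedness, and convexity are all preserved by finite intersection, so the same argument yields $\cdim(\cD)\le\cdim(\cC)$.
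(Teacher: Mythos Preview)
Your argument is correct and is the standard one in the literature. Note, however, that the paper does not actually supply a proof of this proposition: it simply states the result and refers the reader to \cite[Section~9]{embeddingphenomena} for details. What you have written is essentially the argument found there (and implicit already in \cite{morphisms}): a surjective morphism $f:\cC\to\cD$ is encoded by the trunks $f^{-1}(\tk_\cD(i))=\tk_\cC(\sigma_i)$, and replacing a realization $\{U_j\}$ of $\cC$ by $\{V_i=\bigcap_{j\in\sigma_i}U_j\}$ produces a realization of $\cD$ in the same ambient space, since finite intersections preserve openness, closedness, and convexity. So there is no divergence between your approach and the paper's; you have simply filled in what the paper outsources to a citation.
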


As a result, open (respectively closed) convexity is preserved when replacing a code by a minor. We may therefore define a notion of minimal non-convexity for open or closed convex codes. We say that a code $\cC$ is \emph{minimally non-convex} when it is not open convex, but all of its proper minors are open convex. 
For instance, \cite[Theorem 5.10]{morphisms} gives a minimally non-convex good cover code $$\cC_0 = \{{\bf 3456}, {\bf 123}, {\bf 145}, {\bf 256}, 45, 56, 1, 2, 3, \emptyset\}.$$  This code is a proper minor of the first non-convex good cover code, which is described in \cite[Theorem 3.1]{obstructions}.

In this subsection, we introduce a family of codes $\{\cL_n\mid n\ge 0\} $ which generalize $\cC_0$ to an infinite family of minimally non-convex codes. Geometrically, each of these codes is only a small modification of the code $\cC_0$, and has the same basic obstruction to convexity. This lies in contrast to  \cite[Theorem 4.2]{sunflowers}, which generalizes the non-convex code $\cC_0$ in  \cite[Theorem 5.10]{morphisms} to an infinite family of minimally non-convex codes by using higher-dimensional versions of the sunflower theorem.  Thus the family $\{\cL_n\mid n\ge 0\}$ demonstrates that the intuition that each minimally non-convex code should result from a ``new" obstruction to convexity does not hold.

\begin{center}
\begin{figure}[ht!]
\includegraphics[width = 5 in]{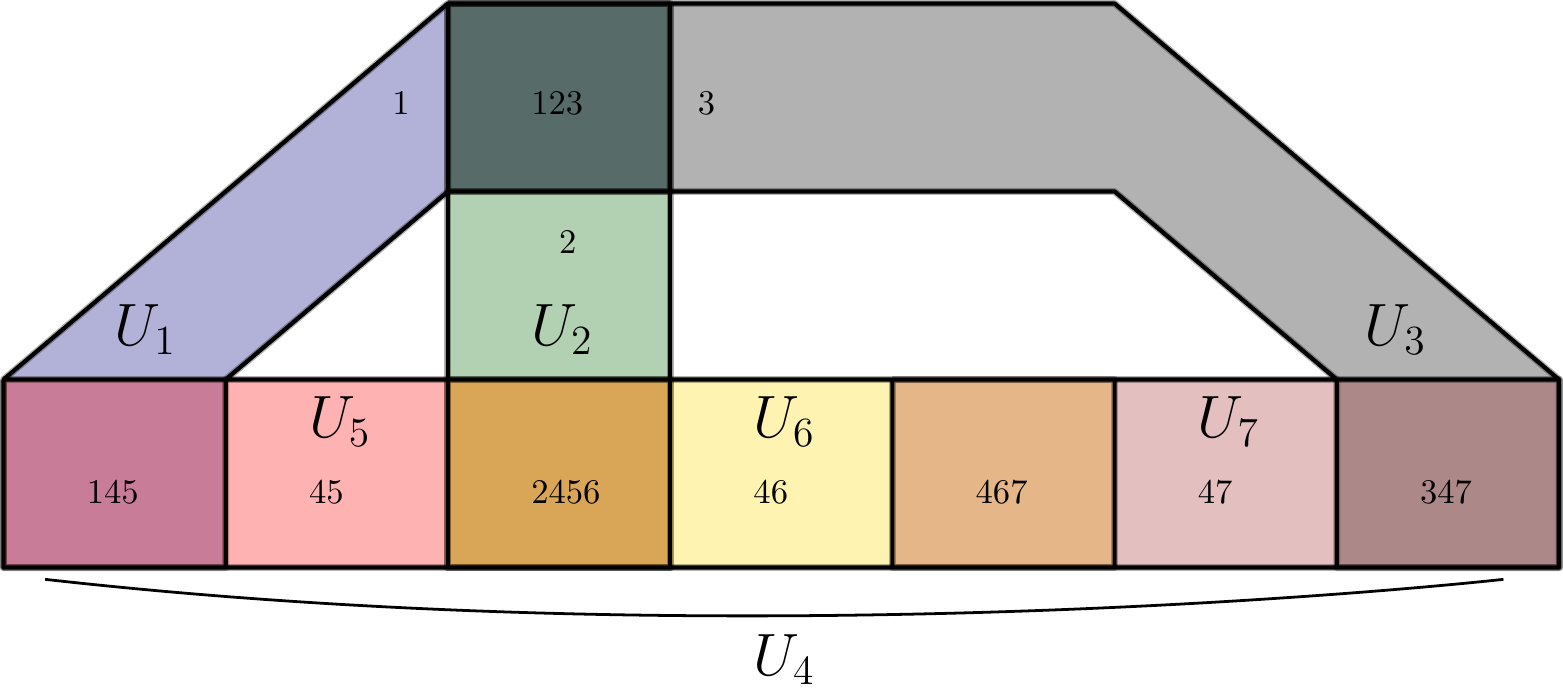}
\caption{$\cL_1 = \{\mathbf{2456}, \mathbf{123}, \mathbf{145}, \mathbf{437}, \mathbf{467}, 45, 46, 47, 1, 2, 3, \emptyset\}$}
\label{fig:L1}
\end{figure} 
\end{center}

\begin{defn}For $n\geq 0$, define the code 
$$\cL_{n} = \{\emptyset, 1, 2, 3, \mathbf{123}, \mathbf{145}, 45, \mathbf{2456}, 46, \mathbf{467}, 47, \mathbf{478}, \ldots, 4(n+6),34(n+6)\}$$. 
\end{defn}


For instance, $\cL_1 = \{\mathbf{2456}, \mathbf{123}, \mathbf{145}, \mathbf{437}, \mathbf{467}, 45, 46, 47, 1, 2, 3, \emptyset\}$. A good cover realization of $\cL_1$ is given in Figure \ref{fig:L1}.

Notice below that $\cL_0$ is equal to $\cC_0$ under the permutation of the neurons $2 \lra 3$ and $4\lra 5$. 
Thus, the family $\cL_n$ generalizes $\cC_0$. 
Even though each $\cL_n$ is minimally non-convex, the non-convexity of $\cL_0$ directly implies the non-convexity of each $\cL_n$ for $n > 0$.

\begin{prop}\label{prop:stretch_sun}
For $n\ge 0$, the code $\cL_n$ is a good cover code, but is minimally non-convex.
\end{prop}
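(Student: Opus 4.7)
The plan is to establish three things: (i) $\cL_n$ has a good cover realization, (ii) $\cL_n$ is not open convex, and (iii) every proper minor of $\cL_n$ is open convex. For (i), I would generalize the realization of $\cL_1$ in Figure~\ref{fig:L1} by inductively appending convex open petals $U_8, U_9, \ldots, U_{n+6}$ along the strip $U_4$: each $U_k$ overlaps $U_{k-1}$ in a small convex region so that $\mathbf{4(k-1)k}$ is the only new maximal codeword, and $U_{n+6}$ is placed to additionally touch $U_3$, producing the atom $A_{34(n+6)}$. Every nonempty intersection of the $U_i$'s is convex and hence contractible, so this is a good cover.

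For (ii), I would argue by order-forcing. Suppose for contradiction $\cU=\{U_1,\ldots,U_{n+6}\}$ is an open convex realization in some $\R^d$; set $\omega = \mathbf{34(n+6)}$ and choose $p_{123} \in A^\cU_{123}$, $p_{145} \in A^\cU_{145}$, and $p_\omega \in A^\cU_\omega$. The codewords of $\cL_n$ containing $4$ are exactly $\mathbf{145}, 45, \mathbf{2456}, 46, \mathbf{467}, 47, \ldots, 4(n+6), \omega$, and the subgraph of $G_{\cL_n}$ they induce is a simple path. Since $\mathbf{145} \cap \omega = \{4\}$, feasibility forces every feasible $\mathbf{145}$-to-$\omega$ walk in $G_{\cL_n}$ to lie in this subgraph, and since the subgraph is a simple path, that walk is the path itself; hence the path is strongly order-forced. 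By Theorem~\ref{thm:order-forcing}, the segment $\overline{p_{145}p_\omega}$ (which lies in $U_4$ by convexity) crosses these atoms in order, and in particular it contains a point $q \in A^\cU_{2456} \subseteq U_2$. Three analogous short order-forcings show that the paths $\mathbf{123},1,\mathbf{145}$, $\mathbf{123},2,\mathbf{2456}$, and $\mathbf{123},3,\omega$ are each strongly order-forced (using that the only codewords of $\cL_n$ containing $1$, $2$, or $3$ are, respectively, $\{1,\mathbf{123},\mathbf{145}\}$, $\{2,\mathbf{123},\mathbf{2456}\}$, and $\{3,\mathbf{123},\omega\}$), so $\overline{p_{123}p_{145}}$ passes through $A^\cU_1$, $\overline{p_{123}q}\subseteq U_2$ passes through $A^\cU_2$, and $\overline{p_{123}p_\omega}$ passes through $A^\cU_3$. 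This reproduces verbatim the geometric configuration used in \cite[Theorem~5.10]{morphisms} to obstruct convexity of $\cL_0 = \cC_0$: a triangle $\triangle p_{123}p_{145}p_\omega$ decorated with the pairwise disjoint atoms $A^\cU_1, A^\cU_2, A^\cU_3$ and enclosed by the sunflower $\{U_1, U_2, U_3\}$ with core $A_{123}$. The same sunflower-based contradiction therefore applies.

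For (iii), I would verify open convexity of each proper minor by case analysis on elementary minor operations (deletion of a neuron, removal of a codeword via a quotient, or taking a subcode). Every such operation applied to $\cL_n$ destroys at least one structural ingredient of the obstruction in (ii)---either the sunflower on $\{1,2,3\}$ or the strongly order-forced chain through $U_4$---and in each case I would exhibit a direct open convex realization in $\R^2$ by modifying the good cover from (i). The main obstacle is precisely this step, since the chain through $U_4$ grows with $n$: rather than enumerating each minor individually, I would aim for a uniform family of realizations parameterized by which structural feature of the obstruction is broken. Part (ii) is conceptually cleanest, since once the order-forcing is set up the contradiction reduces to the known $\cL_0 = \cC_0$ obstruction.
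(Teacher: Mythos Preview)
Your argument for non-convexity in (ii) is correct but follows a genuinely different route from the paper's. You use one long order-forcing through the codewords containing $4$ to place a point $q\in A^{\cU}_{2456}$ on the segment $\overline{p_{145}\,p_\omega}\subseteq U_4$; then, working in the $2$-plane spanned by $p_{123},p_{145},p_\omega$ (which automatically contains $q$), the restricted sets $U_1,U_2,U_3$ form a $3$-petal sunflower in $\R^2$ with nonempty center (it contains $p_{123}$), and the $d=2$ sunflower theorem forces the degenerate triangle $\conv\{p_{145},q,p_\omega\}=\overline{p_{145}\,p_\omega}$ to meet $U_{123}$, contradicting $U_4\cap U_{123}=\emptyset$. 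The paper instead argues by induction on $n$: a short order-forcing shows that $U_{n+5}\cup U_{n+6}$ must be convex in any realization of $\cL_n$, so replacing $U_{n+5}$ by this union yields a convex realization of $\cL_{n-1}$, and one descends to the known base case $\cL_0=\cC_0$. Your approach exposes the underlying sunflower obstruction directly for every $n$; the paper's approach packages the reduction cleanly and isolates the ``rigid structure'' phenomenon (forced convexity of a union of receptive fields), connecting to \cite{chan2020nondegenerate}. Incidentally, your three extra order-forcings from $p_{123}$ through $A^\cU_1,A^\cU_2,A^\cU_3$ are not needed for the contradiction once you invoke the sunflower argument.

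For (iii) your outline has a technical gap: the covering relations in $\pcode$ are \emph{not} neuron deletion, codeword removal, or passing to a subcode. A code $\cD$ is covered by $\cC$ in $\pcode$ exactly when $\cD$ is one of the covered codes $\cC^{(i)}$ of Definition~\ref{def:covered} (from \cite[Definition~3.9]{sunflowers}), one for each neuron $i$. The paper's Appendix~\ref{sec:constructions} carries out precisely the plan you describe---explicit convex realizations organized uniformly in $n$---but over this correct list of covered codes $\cL_n^{(i)}$. Once you substitute the right notion of covering relation, your plan for (iii) matches the paper's.
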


\begin{proof}
We first show that $\cL_n$ is non-convex by induction on $n$. 
The base case, that $\cL_0$ is non-convex, is proven by \cite[Theorem 5.10]{morphisms} since $\cL_0$ is permutation equivalent to $\cC_0$. Now, we show that if $\cL_{n-1}$ is not convex, then neither is $\cL_n$.
We do this by proving the contrapositive: in any convex realization of $\cL_n$, we can merge the sets  $U_{n+5}$ and $U_{n+6}$ in a convex realization of $\cL_n$ to produce a convex realization of $\cL_{n-1}$.
That is, if $\{U_1, \ldots, U_{n+5}, U_{n+6}\}$ is a convex realization of $\cL_n$, then $\{V_1,\ldots, V_{n+5}\}$ is a convex realization of $\cL_{n-1}$ where $V_1 = U_1, \ldots, V_{n+4} = U_{n+4}, V_{n+5} = U_{n+5}\cup U_{n+6}$. 

This gives us two things to check. First, we must check that $\code(\{V_1, \ldots, V_{n+5}\}) = \cL_{n-1}$. If $\sigma$ is a codeword of $\cL_n = \code(\{U_1, \ldots, U_{n+6}\})$ which does not contain the neuron $n+6$, then $\sigma$ is still a codeword of $\code(\{V_1, \ldots, V_{n+5}\})$. The three codewords of $\cL_n$ which contain $n+6$ are $4(n+5)(n+6)$, $4(n+6)$, and $34(n+6)$. If we pick a point $p$ in the atom of  $4(n+5)(n+6)$ or $4(n+6)$ with respect to $U_1, \ldots, U_n$, it is now in the atom of $4(n+5)$. If we pick a point in the atom of $34(n+6)$ with respect to $U_1, \ldots, U_{n+6}$, it is in the atom of  $34(n+5)$ with respect to $\{V_1, \ldots, V_{n+6}\}$.

Next, we must check that $V_5 = U_{n+5} \cup U_{n+6}$ is convex.
That is, we must check that for each pair of points $x, y\in U_{n+5}\cup U_{n+6}$, the line segment from $x$ to $y$ is contained in $U_{n+5}\cup U_{n+6}$. 
Without loss of generality, let $x\in U_{n+5}\setminus U_{n+6}$, $y\in U_{n+6}\setminus U_{n+5}$. 
The point $x$ must be contained in the atom of $4(n+5)$ or $4(n+5)(n+4)$. (If $n = 1$,  $4(n+5)(n+4)$ replaces with $24(n+5)(n+4)$.) The point $y$ must be contained in the atom of $4(n+6)$ or $34(n+6)$. In all of these cases, the only feasible path from $x$ to $y$ in $G_{\cL_n}$ includes only codewords containing $n+5$ or $n+6$:
$$ 4(n+5) \lra 4(n+5)(n+6) \lra 4(n+6) \lra 34(n+6).$$ 
Thus the line segment from $x$ to $y$ is contained in $U_{n+6}\cup U_{n+6}$.  See Figure \ref{fig:sun_proof} for an illustration of this argument.


To show that $\cL_n$ is minimally nonconvex, we must show that all codes covered by $\cL_n$ in the poset $\pcode$ are convex. We give a proof of this in Appendix \ref{sec:constructions}, Construction \ref{const:minimal}. 
\end{proof}

Our proof uses ideas similar to the idea of a \emph{rigid structure} in Section 4 of \cite{chan2020nondegenerate}. In particular, our argument that $U_{n+5}\cup U_{n+6}$ must be convex is essentially an open-convex version of a rigid structure, which is a subset of neurons whose union must be convex in any closed-convex realization of a code. 

\begin{figure}[h]\label{fig:sun_proof}
    \centering
    \includegraphics[width = 5 in]{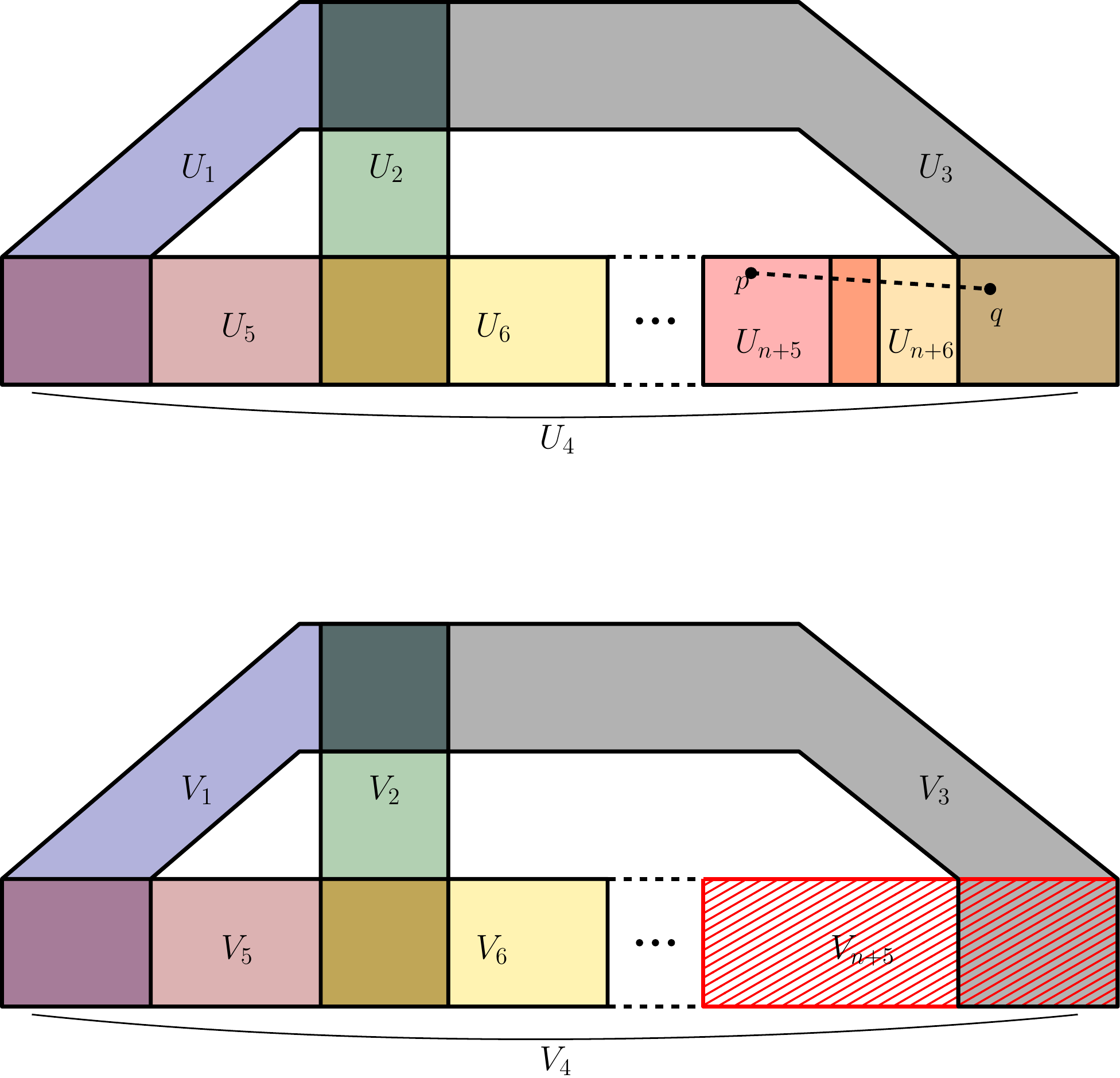}
    \caption{A sketch of the proof of Proposition \ref{prop:stretch_sun}. Since the union of $U_{n+5}$ and $U_{n+6}$ is forced to be convex, we can use a realization of $\cL_{n}$ to construct a realization of $\cL_{n-1}$.}
    \label{fig:sun_proof}
\end{figure}

\subsection{Simple proofs of nonconvexity}\label{sec:braid}
In this section, we give two new examples of good cover codes which are neither open nor closed convex. The proofs that these codes are not convex depend only on order-forcing and elementary geometric arguments. Below, we use lowercase letters for neurons where it would be cumbersome to use only integers.

\begin{center}
\begin{figure}[ht!]
\includegraphics{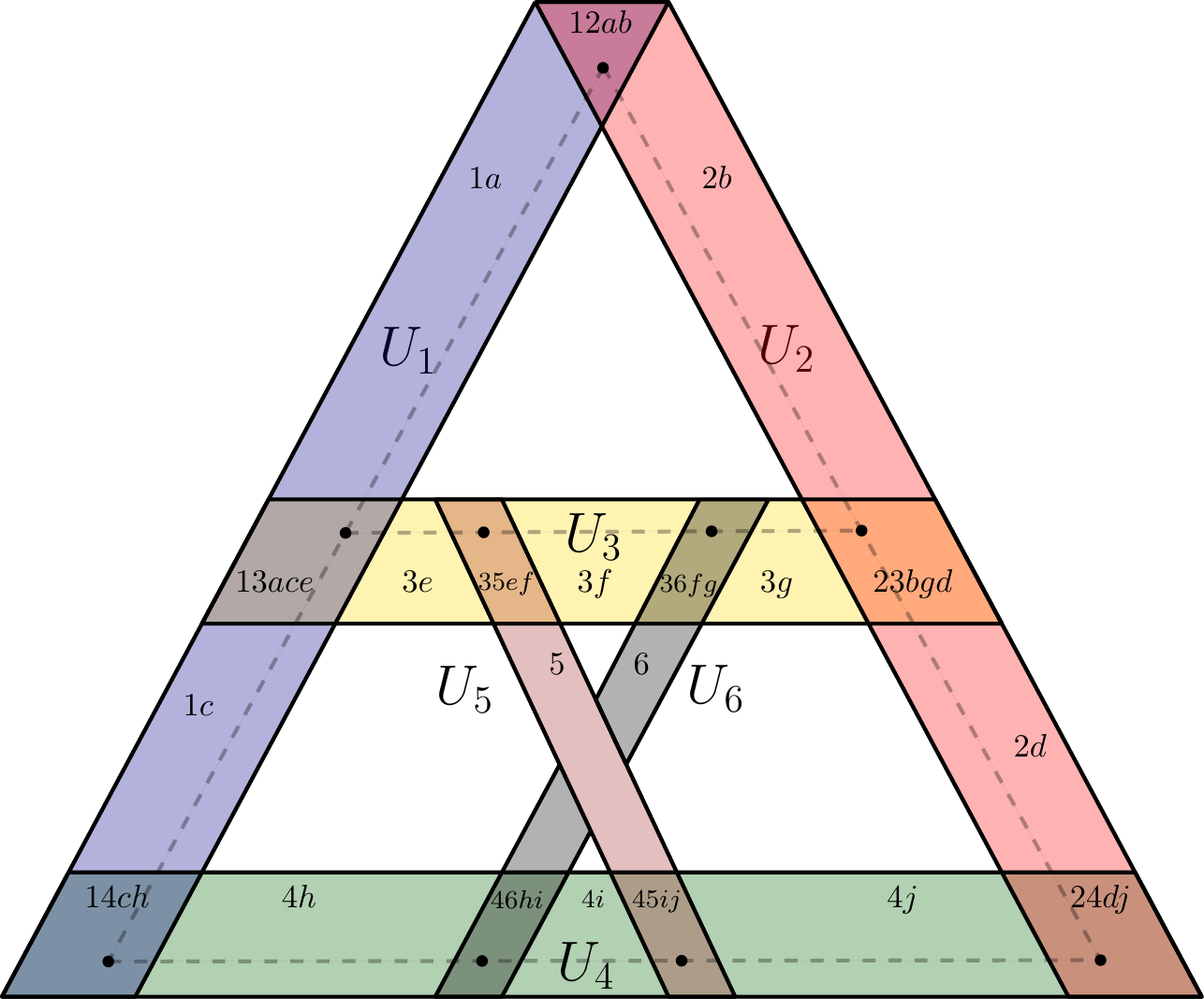}
\caption[something]{A good-cover realization of the non-convex code 
$\cR$ in $\R^3$. The open sets $U_a, U_b, U_c, U_d, U_e, U_f, U_g, U_h, U_i, U_j$ are not shown. Instead, maximal order-forced codewords are noted with vertices, and sets of order-forced vertices are indicated with dashed lines. \label{fig:rowboat}}
\end{figure} 
\end{center}

\begin{prop}\label{prop:rowboat}
The code \begin{align*}
\cR &= \{\mathbf{12ab}, \mathbf{13ace}, \mathbf{14ch}, \mathbf{23bgd},   \mathbf{24dj}, \mathbf{35ef},  \mathbf{36fg},   \mathbf{46hi},  \mathbf{45ij}, \\ &\quad\quad 1a,1c, 2b,2d, 3e,3f,3g,4i,4h, 4j, 5, 6, \emptyset\}
\end{align*} is a good cover code, but is neither open nor closed convex.
\end{prop}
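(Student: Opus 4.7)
The plan is first to observe that Figure~\ref{fig:rowboat} exhibits a good cover realization of $\cR$, so the only work is the non-convexity. Assume for contradiction that $\cU = \{U_1,\ldots,U_j\}$ is a convex realization (open or closed).

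The engine is order-forcing. Inspecting $G_\cR$ one checks that the subgraph on codewords containing each of the neurons $1,2,3,4$ is a path graph, the corresponding simple path is feasible, and hence the following four sequences are order-forced: in $U_3$, $13ace,3e,35ef,3f,36fg,3g,23bgd$; in $U_4$, $14ch,4h,46hi,4i,45ij,4j,24dj$; in $U_1$, $12ab,1a,13ace,1c,14ch$; and in $U_2$, $12ab,2b,23bgd,2d,24dj$. The critical asymmetry is that $A_{35ef}$ precedes $A_{36fg}$ in $U_3$ while $A_{46hi}$ precedes $A_{45ij}$ in $U_4$, so the relative order of $U_5$ and $U_6$ reverses between $U_3$ and $U_4$.

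Since $U_5$ and $U_6$ are disjoint convex, I strictly separate them by a hyperplane $H$ and pick an affine functional $\ell$ that is positive on $U_5$ and negative on $U_6$. Applying Theorem~\ref{thm:order-forcing} to the $U_3$- and $U_4$-sequences, and using that $\ell$ is affine (hence monotone) along any line segment, I can read off strict sign constraints on whole atoms: for any $p\in A_{13ace}$ and any $q\in A_{23bgd}$ the segment $[p,q]$ lies in $U_3$, meets $A_{35ef}$ (where $\ell>0$) before $A_{36fg}$ (where $\ell<0$), so $\ell$ is strictly decreasing along $[p,q]$, forcing $\ell(p)>0$ and $\ell(q)<0$. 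Hence $A_{13ace}\subset\{\ell>0\}$ and $A_{23bgd}\subset\{\ell<0\}$, and symmetrically $A_{24dj}\subset\{\ell>0\}$ and $A_{14ch}\subset\{\ell<0\}$. Chaining once more through $U_1$ (the segment from any $p\in A_{12ab}$ to any $q\in A_{14ch}$ must lie in $U_1$, pass through $A_{13ace}$ where $\ell>0$, and end at $\ell(q)<0$) forces $\ell(p)>0$, so $A_{12ab}\subset\{\ell>0\}$; the analogous $U_2$-argument, using $A_{23bgd}$ as an intermediate of negative sign, gives $A_{2b}\subset\{\ell<0\}$.

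The contradiction now appears at the helper set $U_b$. The only codewords of $\cR$ containing $b$ are $12ab$, $23bgd$, and $2b$, so $U_b = A_{12ab}\cup A_{23bgd}\cup A_{2b}$. By the sign analysis each of these three atoms is strictly contained in $\{\ell>0\}$ or $\{\ell<0\}$, so $U_b\cap H = \varnothing$. However, $U_b$ is convex and contains points with $\ell>0$ (from $A_{12ab}$) and points with $\ell<0$ (from $A_{23bgd}$); the segment joining them lies in $U_b$ and must cross $H$, yielding $U_b\cap H\neq\varnothing$, the desired contradiction. The step I expect to require the most care is chaining the order-forcing from the four corner atoms outward to $A_{12ab}$ and $A_{2b}$, and, for the closed-convex case, ensuring that a \emph{strict} affine functional $\ell$ exists; the latter can be arranged by strictly separating the relative interiors of $U_5$ and $U_6$, or by anchoring the separation at the atoms $A_5\subset U_5$ and $A_6\subset U_6$ which are disjoint from every other $U_i$, so that strict separation is available before invoking the sign-propagation argument.
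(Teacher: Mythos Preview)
Your argument is correct and takes a genuinely different route from the paper's proof. The paper first restricts any putative realization to the triangle $\conv(p_{12},p_{14},p_{24})$, argues via order-forcing that every atom meets this triangle, and then derives a planar crossing contradiction: the segment $\overline{p_{35}p_{45}}\subset U_5$ must cross $\overline{p_{36}p_{46}}\subset U_6$ because the relative order of the ``$5$'' and ``$6$'' atoms flips between the $U_3$- and $U_4$-sides of the triangle. You instead separate $U_5$ and $U_6$ by a hyperplane and propagate the sign of an affine functional $\ell$ outward through the order-forced chains. This avoids the reduction to $\R^2$ and the informal ``left/right'' reasoning, and it works uniformly in any ambient dimension; the paper's approach is more pictorial but requires the extra step of showing the realization can be sliced down to a plane.

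Two small points. First, your final detour through $U_b$ is unnecessary: once you have $A_{12ab},A_{24dj}\subset\{\ell>0\}$ and $A_{23bgd}\subset\{\ell<0\}$, the order-forced $U_2$-chain from $12ab$ to $24dj$ already gives a segment on which the affine $\ell$ is positive at both ends and negative in the middle --- an immediate contradiction. Second, your handling of strict separation in the closed case is not quite right: separating relative interiors strictly still does not guarantee $\ell\neq 0$ on the atoms $A_{35ef}$, $A_{36fg}$, etc., since those atoms need not lie in the interior of $U_5$ or $U_6$; and anchoring at $A_5,A_6$ does not control the sign at these other atoms either. The clean fix (used by the paper in the proof of Proposition~\ref{prop:braid}) is simply to intersect every set with a large closed ball first, so that $U_5$ and $U_6$ become compact and the strict separating hyperplane theorem applies directly.
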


\begin{proof}
We first show that if $\cR$ is convex, then it has a convex realization in the plane. We then show that it does not. Choose points $p_{12}\in A_{12ab}^\cU$, $p_{14}\in A_{14ch}^\cU$, and $p_{24}\in A_{24dj}^\cU$. 
We will use order-forcing to show that each atom of any realization of $\cR$ must have nonempty intersection with $A = \conv(p_{12}, p_{14}, p_{24})$, so that $\{U_i \cap A \mid i\in \{1, \ldots, 6, a, \ldots, h\}\}$ is a convex realization of $\cR$ in $\aff(p_{12}, p_{14}, p_{24})\cong \R^2$. 

First, notice the following order-forced sequences: 
\begin{enumerate} 
\item the only feasible path from $12ab$ to $14ch$ 
is 
$$12ab \leftrightarrow 1a \leftrightarrow 13ace  \leftrightarrow  1c  \leftrightarrow 14ch$$
\item the  only feasible path from $12ab$ to $24dj$ 
is 
$$12ab \lra 2b \lra 23bdg \lra 2d\lra 24dj$$
\item 
the only feasible path from $14ch$ to $24dj$
is 
$$14ch \lra 4h\lra 46hi\lra 45ij \lra 4j \lra 24dj$$
\item 
the only feasible path from $13ace$ to $23bgd$ 
is 
$$13ace \lra 3e \lra 35ef \lra 3f \lra 36fg \lra 3g \lra 23bgd$$
\item the only feasible  path from $35ef$ to $45ij$ 
is 
$$35ef \lra 5 \lra 45ij$$
\item the only feasible path from $36fg$ to $46hi$ 
is 
$$36fg \lra 6\lra 46hi.$$
\end{enumerate}
Now, by Theorem \ref{thm:order-forcing} and order-forcings (1), (2), and (4), the atoms corresponding to codewords $$\{12ab, 1a, 13ace, 1c,  14ch, 2b, 23bdg,  2d, 24dj, 4h, 46hi, 45ij, 4j \}$$ have nonempty intersection with $A$. Thus, we can pick $p_{13}\in A\cap A_{13ace}^\cU$, $p_{23}\in A \cap A_{23bdg}^\cU$, $p_{45} \in A \cap A_{45ij}^\cU$, and $p_{46}\in A\cap A_{46hi}^\cU$. Applying order-forcing (3) to $p_{13}$ and $p_{23}$, we deduce that the atoms corresponding to codewords $$\{3e , 35ef, 3f,  36fg, 3g\}$$ have nonempty intersection with $A$. Thus, we can pick $p_{35} \in A \cap A_{35ef}^\cU$ and $p_{36}\in A\cap A_{36fg}^\cU$. Finally, applying order-forcings (5) and (6), we deduce that the atoms corresponding to codewords $\{5, 6\}$ have nonempty intersection with $A$. This accounts for all codewords of $\cR$. 

Next, we show that $\cR$ cannot have a realization in the plane. Note that by applying an appropriate affine transformation, we can assume that $p_{12}$ is above $p_{14}$ and $p_{24}$, with $p_{14}$ to the left of $p_{24}$, as pictured in Figure \ref{fig:rowboat}. Then by order-forcings (3) and (4), $p_{35}$ must be to the left of $p_{36}$, while $p_{45}$ must be to the right of $p_{46}$. This implies the line segments $\overline{p_{35}p_{45}}$ and $\overline{p_{36}p_{46}}$ must intersect. But if $ p\in \overline{p_{35}p_{45}}\cap \overline{p_{36}p_{46}}$, then $p \in U_5 \cap U_6$. But, since $U_5$ and $U_6$ must be disjoint in any realization of $\cR$, this is not possible. 
\end{proof}

\begin{prop}\label{prop:braid} The code
$$\cT = \{\mathbf{14a}, \mathbf{15ab},  \mathbf{16bg},\mathbf{25c},  \mathbf{24cd}, \mathbf{26dgh},  \mathbf{34e},  \mathbf{35ef},  \mathbf{36fh},$$ $$1a, 1b,2c, 2d,3e,3f, 6g,6h, 4, 5,\emptyset\} $$  is a good cover code, but is neither closed nor open convex.
\end{prop}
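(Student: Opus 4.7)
The plan is to follow the template of the proof of Proposition~\ref{prop:rowboat}: first display a good cover realization to handle the easy direction, then assume for contradiction that $\cT$ admits a convex realization $\cU = \{U_1,\ldots,U_6,U_a,\ldots,U_h\}$---open or closed---and use order-forcing to derive a geometric contradiction. Unlike the rowboat argument, I do not expect to need a reduction to the plane: the obstruction should live directly in $\R^d$ via a single separating-hyperplane step.

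The first task is to extract the order-forced sequences from $G_\cT$. For each $i\in\{1,2,3\}$ the subgraph of $G_\cT$ induced on codewords containing neuron $i$ is, by inspection, a path through the auxiliary neurons from the $i$-th ``row,'' and likewise the subgraph on codewords containing $6$ is a path through $g$ and $h$. This gives the unique feasible paths
\[
14a,\,1a,\,15ab,\,1b,\,16bg; \qquad 25c,\,2c,\,24cd,\,2d,\,26dgh;
\]
\[
34e,\,3e,\,35ef,\,3f,\,36fh; \qquad 16bg,\,6g,\,26dgh,\,6h,\,36fh,
\]
which are therefore strongly order-forced. Now pick representatives $p_{14}\in A^\cU_{14a}$, $p_{16}\in A^\cU_{16bg}$, $p_{34}\in A^\cU_{34e}$, $p_{36}\in A^\cU_{36fh}$ and $p_{25}\in A^\cU_{25c}$. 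Applying Theorem~\ref{thm:order-forcing} to these four sequences produces in succession $p_{15}\in A^\cU_{15ab}$ strictly between $p_{14}$ and $p_{16}$ on $\overline{p_{14}p_{16}}$; $p_{35}\in A^\cU_{35ef}$ strictly between $p_{34}$ and $p_{36}$ on $\overline{p_{34}p_{36}}$; $q_{26}\in A^\cU_{26dgh}$ strictly between $p_{16}$ and $p_{36}$ on $\overline{p_{16}p_{36}}$; and $p_{24}\in A^\cU_{24cd}$ strictly between $p_{25}$ and $q_{26}$ on $\overline{p_{25}q_{26}}$.

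For the contradiction, set $K_4 = \conv(p_{14},p_{24},p_{34}) \subset U_4$ and $K_5 = \conv(p_{15},p_{25},p_{35}) \subset U_5$. Since no codeword of $\cT$ contains both $4$ and $5$ we have $U_4\cap U_5=\emptyset$, hence $K_4\cap K_5 = \emptyset$. Each $K_j$ is the convex hull of finitely many points and so is compact, so there exist a linear functional $f$ and a constant $c$ with $f(x)<c$ for $x\in K_4$ and $f(y)>c$ for $y\in K_5$. Solving the convex-combination equation expressing $p_{15}$ on $\overline{p_{14}p_{16}}$ then forces $f(p_{16})>c$, and analogously $f(p_{36})>c$; consequently $f(q_{26})>c$ as well, since $q_{26}$ is a convex combination of $p_{16}$ and $p_{36}$. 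But $p_{24}$ is a strict convex combination of $p_{25}$ and $q_{26}$, giving $f(p_{24})>c$, which contradicts $f(p_{24})<c$.

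The main obstacle I anticipate is making the argument uniform for open and closed realizations: the standard pitfall is that disjoint closed convex sets in $\R^d$ need not admit a \emph{strict} separating hyperplane. Working with the compact hulls $K_4, K_5$ rather than $U_4, U_5$ themselves sidesteps this, since disjoint compact convex sets always admit strict hyperplane separation; the remainder of the argument then uses only elementary convex combinations and applies to both cases at once.
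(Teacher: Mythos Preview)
Your proof is correct and follows essentially the same approach as the paper: the same four order-forced sequences, the same chain of point choices, and a separating-hyperplane contradiction coming from $U_4\cap U_5=\emptyset$. Your device of separating the compact hulls $K_4=\conv(p_{14},p_{24},p_{34})$ and $K_5=\conv(p_{15},p_{25},p_{35})$ rather than $U_4$ and $U_5$ themselves is a clean way to make the strict separation uniform for the open and closed cases; the paper instead handles these separately, invoking strict separation of disjoint open convex sets in the open case and a somewhat informal ``without loss of generality compact'' reduction in the closed case.
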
 

\begin{center}

\begin{figure}[ht!]
\includegraphics{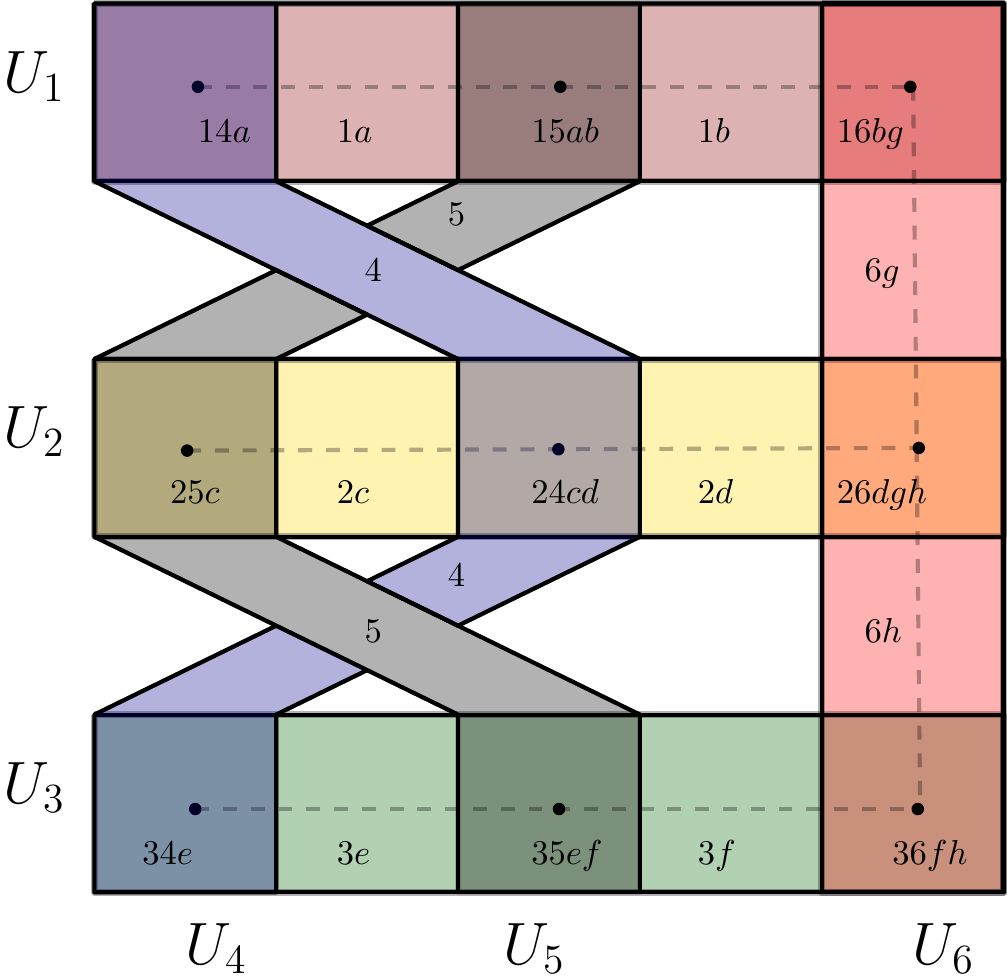}
\caption{A good cover realization of the code 
$$ \cT =  \{\mathbf{14a}, \mathbf{15ab},  \mathbf{16bg},\mathbf{25c},  \mathbf{24cd}, \mathbf{26dgh},  \mathbf{34e},  \mathbf{35ef},  \mathbf{36fh},$$
$$1a, 1b,2c, 2d,3e,3f, 6g,6h, 4, 5,\emptyset\}  
$$
in $\R^3$. The sets $U_1,\ldots, U_6$ are highlighted with various colors while $U_a,\ldots, U_h$ are not highlighted.} 
\label{fig:twist}
\end{figure} 
\end{center}

\begin{proof}
Suppose to the contrary that $\mathcal T$ has a convex realization $\{U_1,\ldots, U_6, U_a,\ldots, U_h\}$. Since the sets $U_4$ and $U_5$ must be disjoint convex sets which are either both open or both closed,  there exists a hyperplane $H$ separating them. In particular, if $U_4$ and $U_5$ are both open, then by the open-set version of the hyperplane separation theorem there is a hyperplane strictly separates them.  That is, $H$ separates $\R^n$ into open half spaces $H^+$ and $H^-$  with $U_4\subseteq H^+$ and $U_5\subseteq H^-$. This also holds if $U_4$ and $U_5$ are both closed. In this case, then without loss of generality, we can choose both sets to be compact. Thus by the compact-set version of the separating hyperplane theorem, there exists a hyperplane $H$ strictly separating them.  We will use order-forcing to exhibit a line segment which crosses  $H$ twice, a contradiction. 

We show that the triples of codewords corresponding to marked points in Figure \ref{fig:twist} are  order-forced. 
More specifically,  we have that:
\begin{enumerate} 
\item the only feasible path from $14a$ to $16bg$ 
is 
$$14a \lra 1a \lra 15ab  \lra 1b  \lra 16bg$$
\item the  only feasible path from $25c$ to $26dgh$ 
is 
$$25c\lra 2c \lra 24cd \lra 2d\lra 26dgh$$
\item  the only feasible path from $34e$ to $36fh$ 
is 
$$34e \lra 3e \lra 35ef \lra 3f \lra 36fh $$

\item the only feasible path from $16bg$ to $36fh$ 
is 
$$16bg \lra 6g\lra 26dgh \lra 6h \lra 36fh.$$
\end{enumerate}

Choose points $p_{14}\in U_{14a}$, $p_{16}\in U_{16bg}$,$p_{25}\in U_{25c}$, $p_{34}\in U_{34e}$, and $p_{36}\in U_{36fh}$. Define line segments $L_1 = \overline{p_{14}p_{16}}$ and $L_3 = \overline{p_{34}p_{36}}$. Notice that by order-forcing (1) we may choose $p_{15}\in L_1\cap U_{15ab}$. Similarly by order-forcing (3) we may choose $p_{35}\in L_3\cap U_{35ef}$.

By ordering forcing (4) we may choose a point $p_{26}\in U_{26dgh}$ on the line segment $\overline{p_{16}p_{36}}$. Lastly, order-forcing (2) allows us to choose a point $p_{24}\in U_{24cd}$ on the line segment $L_2 = \overline{p_{25}p_{26}}$.

Since each of $L_1$ and $L_3$ can only cross $H$ once, the fact that $p_{14}$ and $p_{34}$ are contained in $U_4$, and thus in $H^+$ implies that the points $p_{16}$ and $p_{36}$ are contained in $H^-$. Likewise, the fact that $L_2$ crosses $H$ only once and $p_{25}$ is contained in $U_5$, and thus in $H^+$, implies that the point $p_{26}$ is contained in $H^-$. Thus, the line from $p_{16}$ to $p_{36}$ crosses $H$ twice, a contradiction. 
\end{proof}

Note that both of these codes can be used to generate infinite families of non-convex codes using the same trick we use to produce $\cL_{n}$ from $\cL_0$. 
The codes $\cT$ and $\cR$ do not lie above any previously known non-convex codes in $\pcode$, and in fact are minimally non-convex. This can be checked by exhaustive search of the codes that they cover in $\pcode$, as described in Definition \ref{def:covered}.

\section{Strict Monotonicity of Convexity in All Dimensions}\label{sec:strict_mono}

In this section we turn our attention to a result of \cite{openclosed}, which states that open convexity is a ``monotone" property of codes in the following sense: adding non-maximal codewords to an open convex code preserves its convexity. In fact, \cite{openclosed} proved that adding non-maximal codewords cannot increase the open embedding dimension of a code by more than one. Surprisingly, the same results do not hold for codes that can be realized by closed convex sets \cite{nomonotone}. We now formally state this monotonicity result:

\begin{thm}[Theorem 1.3 in \cite{openclosed}]\label{thm:monotonicity}
Let $\cC\subseteq\cD$ be codes with the same maximal codewords. Then $\odim(\cD)\le \odim(\cC)+1$.
\end{thm}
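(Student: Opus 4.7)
The plan is to construct, from an open convex realization of $\cC$ in $\R^d=\R^{\odim(\cC)}$, an open convex realization of $\cD$ in $\R^{d+1}$, using the extra dimension to carve out atoms for the new codewords in $\cD\setminus\cC$ without disturbing the atoms of $\cC$. The hypothesis that $\cC$ and $\cD$ share maximal codewords is crucial: each $\sigma \in \cD\setminus\cC$ is non-maximal in $\cD$, hence strictly contained in some maximal $\mu_\sigma\in\cC$, and the atom $A^{\cU}_{\mu_\sigma}$ is a nonempty open set inside which the atom of $\sigma$ can be placed.

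\medskip

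Concretely, I would fix an open convex realization $\cU=\{U_1,\ldots,U_n\}$ of $\cC$ in $\R^d$, enumerate $\cD\setminus\cC=\{\sigma_1,\ldots,\sigma_m\}$, and for each $k$ pick a representative point $p_k\in A^{\cU}_{\mu_k}$ together with a distinct height $t_k>0$. The target is to define convex open sets $V_1,\ldots,V_n$ in $\R^{d+1}=\R^d\times\R$ such that $(i)$ a neighborhood of $\R^d\times\{0\}$ realizes $\cC$, $(ii)$ at each $(p_k,t_k)$ the active neurons are exactly those of $\sigma_k$, and $(iii)$ no extra codewords appear at any $(x,t)\in\R^{d+1}$. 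A natural first attempt is the product construction $V_j = U_j\times I_j$ for carefully chosen open intervals $I_j\ni 0$, arranged so that $t_k\in I_j$ if and only if either $j\in \sigma_k$ or $j\notin\mu_k$. Each such $V_j$ is automatically open and convex, and items $(i)$ and $(ii)$ follow by design.

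\medskip

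The main obstacle is item $(iii)$: at a height $t$ strictly between two of the $t_k$'s, the active set $S(t) = \{j : t\in I_j\}$ may cut some $c\in\cC$ to a subset $c\cap S(t)$ that does not belong to $\cD$, producing spurious codewords. To kill these, I would refine the construction by taking $V_j$ to be the open convex hull of $U_j\times\{0\}$ together with small open balls $B_k$ around $(p_k,t_k)$ for each $k$ with $j\in\sigma_k$. Each $V_j$ remains open and convex by construction; the cross-section at height $0$ is still $U_j$ (since $t_k>0$ forces the interpolation parameter to be trivial), and the cross-section near $(p_k,t_k)$ contributes exactly the neurons in $\sigma_k$. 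Choosing the $p_k$'s in general position, spacing the $t_k$'s far apart, and shrinking the radii of the $B_k$'s should make the ``corridors'' of $V_j$ so narrow that every intermediate cross-section of $V_j$ is contained in an arbitrarily small neighborhood of $U_j$, preventing unwanted intersections. The delicate step, and the main obstacle to execute carefully, is the verification that no unwanted codewords are created in these corridors or at their boundaries; this reduces to a finite combinatorial-geometric case analysis that leverages the generic choice of the $p_k$'s and the spacing of the $t_k$'s.
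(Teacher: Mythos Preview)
This theorem is not proved in the paper at all: it is quoted verbatim as Theorem~1.3 of \cite{openclosed} and used as a black box in the proof of Theorem~\ref{thm:strictmonotonicity}. So there is no ``paper's own proof'' to compare your attempt against. That said, your proposal is worth examining on its own terms, and it contains a genuine gap.

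Your high-level plan (lift the realization to $\R^{d+1}$ and use the extra coordinate to create atoms for the new non-maximal codewords) is exactly the right idea, and your first observation---that any $\sigma\in\cD\setminus\cC$ sits strictly inside some maximal $\mu_\sigma\in\cC$, so there is a nonempty open atom $A^{\cU}_{\mu_\sigma}$ available---is the correct starting point. The problem is the specific ``pegs and convex hull'' construction in your second attempt. You define $V_j$ as the (interior of the) convex hull of $U_j\times\{0\}$ together with small balls around $(p_k,t_k)$ for those $k$ with $j\in\sigma_k$, and you hope that at $(p_k,t_k)$ the active neurons are exactly $\sigma_k$. But this fails whenever there exist $j\in\mu_k\setminus\sigma_k$ and some other index $k'$ with $j\in\sigma_{k'}$ and $t_{k'}>t_k$: since $p_k\in A^{\cU}_{\mu_k}\subseteq U_j$, the point $(p_k,t_k)$ lies on a segment from $(u,0)\in U_j\times\{0\}$ to $(p_{k'},t_{k'})$ for $u$ near $p_k$, hence $(p_k,t_k)\in V_j$ even though $j\notin\sigma_k$. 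Concretely, take maximal codewords $\{1,2,3\}$ and $\{1,2,4\}$ and new non-maximal codewords $\sigma_1=\{1\}$, $\sigma_2=\{2\}$. Every admissible $\mu_1$ contains $2$ and every admissible $\mu_2$ contains $1$, so whichever of $t_1,t_2$ is smaller has its peg swallowed by the other neuron's $V_j$; no generic choice of $p_1,p_2$ or spacing of $t_1,t_2$ rescues this. The ``finite combinatorial-geometric case analysis'' you defer to does not close.

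For what it is worth, the argument in \cite{openclosed} (alluded to in Appendix~\ref{sec:constructions} via the ``shave off'' technique of \cite[Lemma~5.7]{openclosed}) proceeds differently: rather than growing cones upward, one thickens each $U_j$ to a slab $U_j\times(-1,1)$ and then \emph{removes} thin slivers near the boundary of the slab from the $V_j$ with $j\notin\sigma_k$, exposing the atom of $\sigma_k$ one codeword at a time. Removing convex slivers keeps each $V_j$ convex and, crucially, never enlarges any $V_j$, which is what prevents the interference that breaks your construction.
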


A natural question arises: under what circumstances is the ``$+1$" term above necessary? That is, when does adding a non-maximal codeword strictly increase the open embedding dimension of a code? The authors in \cite{openclosed} give an example for which $\odim(\cC) = 1$ and $\odim(\cD) = 2$, but do not study the possible gap between $\odim(\cC)$ and $\odim(\cD)$ for codes with larger embedding dimensions.


In the remainder of this section we will show that for any $d\ge 1$ there are codes $\cC\subseteq \cD$ with the same maximal codewords such that $\odim(\cC) = d$ and $\odim(\cD) = d+1$. In other words, we will show that $\odim(\cC)$ may strictly increase when a non-maximal codeword is added to $\cC$, no matter the value of $\odim(\cC)$.

Our approach to proving this result is similar to order-forcing. In particular, we leverage a result about sunflowers of convex open sets (Theorem \ref{thm:sunflower}) to show that a certain atom must appear between other atoms in every realization of a code. This points towards the potential to generalize the order-forcing framework in the future, from examining line segments to examining convex hulls of more than two points (see Question \ref{q:higherdimensional}). This is particularly promising because our results in this section allow us to determine the exact open embedding dimension of certain codes, which is more refined information than whether or not they are convex.

Our primary tool will be the following family of codes.  Below, we make use of neurons decorated by an overline to simplify our notation. In particular, when $\sigma\subseteq [n]$, we let $\overline\sigma = \{\overline i \mid i \in\sigma\}$,  and the neurons in $\overline\sigma$ are distinct from those in $\sigma$.

\begin{defn}\label{def:Pd}
Let $d\ge 1$. The $d$-th \emph{prism code}, denoted $\cP_d$, is the code on neurons $\{1,2,\ldots, d+1\}\cup\{\overline{1},\overline{2},\ldots, \overline{d+2}\}$ which has the following codewords:\begin{itemize}
    \item[(i)] All subsets of $\{1,2,\ldots, d+1,\overline{i}\}\setminus \{i\}$ for each $i$ between 1 and $d+1$,
    \item[(ii)] $\{\overline{1},\overline{2},\ldots, \overline{d+2}\}$.
\end{itemize}
\end{defn}

Observe that $\cP_d$ is intersection complete and has $d+2$ maximal codewords. We start by characterizing the open embedding dimension of $\cP_d$.

\begin{prop}\label{prop:Pddim}
$\odim(\cP_d) = d$. 
\end{prop}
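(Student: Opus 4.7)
The plan is to establish the two inequalities $\odim(\cP_d) \ge d$ and $\odim(\cP_d) \le d$ separately. The lower bound I would handle by a short application of Helly's theorem, while the upper bound requires an explicit construction of a realization in $\R^d$.

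First, for the lower bound: for each $i \in [d+1]$ the set $[d+1] \setminus \{i\}$ is a subset of $M_i := \{1, \ldots, d+1, \overline{i}\}\setminus\{i\}$ and so lies in $\cP_d$; hence in every open convex realization one has $\bigcap_{j \ne i} U_j \ne \emptyset$. On the other hand, $[d+1]$ itself is not contained in any maximal codeword (each $M_i$ omits the index $i$ from the unbarred neurons, and $\{\overline{1}, \ldots, \overline{d+2}\}$ contains none of them), so $[d+1] \notin \cP_d$ and therefore $\bigcap_{i=1}^{d+1} U_i = \emptyset$ in every realization. If a realization existed in $\R^{d-1}$, the sets $U_1, \ldots, U_{d+1}$ would be $d+1$ convex subsets of $\R^{d-1}$ whose every $d$-wise subintersection is nonempty, and Helly's theorem would force $\bigcap_{i=1}^{d+1} U_i \ne \emptyset$, a contradiction. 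Hence $\odim(\cP_d) \ge d$.

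Next, for the upper bound, I would exhibit an open convex realization in $\R^d$. Fix vertices $v_1, \ldots, v_{d+1}$ of a regular $d$-simplex with centroid $c$, and set $F_i = \conv\{v_j : j \ne i\}$. Choose small positive parameters $\epsilon, \rho, \rho'$, each much smaller than the inradius of the simplex, and define $U_i$ to be the open $\epsilon$-neighborhood of $F_i$ for $i \in [d+1]$; $D = B(c, \rho)$; $U_{\overline{i}}$ to be the interior of $\conv(D \cup B(v_i, \rho'))$ (a ``pill'' connecting the central ball to a small ball at $v_i$) for $i \in [d+1]$; and $U_{\overline{d+2}} = D$. One then checks that this realizes $\cP_d$: the non-full subsets of $[d+1]$ appear via neighborhoods of the faces of the simplex (every proper subintersection of the $U_i$ is nonempty while the full intersection is empty, since $\epsilon$ is less than the inradius), $D$ is the atom of the top codeword $\{\overline{1}, \ldots, \overline{d+2}\}$, and each pill $U_{\overline{i}}$ produces the codewords $\{\overline{i}\} \cup S$ for $S \subseteq [d+1] \setminus \{i\}$, including the maximal codeword $M_i$ near $v_i$.

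The main obstacle will be verifying that no forbidden codewords appear; the most delicate point is showing that $U_{\overline{i}} \cap U_{\overline{j}} = D$ for distinct $i, j \in [d+1]$, so that the non-codeword $\{\overline{i}, \overline{j}\}$ is not realized. This reduces to checking that the ``petals'' $U_{\overline{i}} \setminus D$ are pairwise disjoint, which I would arrange by exploiting that the $d+1$ rays from $c$ to $v_1, \ldots, v_{d+1}$ point in pairwise distinct directions spanning $\R^d$: taking $\rho$ and $\rho'$ sufficiently small compared to the pairwise angular separations between these rays makes the pills thin enough to be pairwise disjoint outside $D$. A similar but easier analysis handles the remaining forbidden codewords and confirms that the intermediate codewords of $\cP_d$ all appear.
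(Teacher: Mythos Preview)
Your lower bound via Helly's theorem is correct and is essentially the same argument the paper gives (the paper phrases it as the nerve of $U_1,\ldots,U_{d+1}$ being the boundary of a $d$-simplex, but this is just Helly in disguise).

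Your upper bound construction, however, has a genuine gap: the claim that the petals $U_{\overline i}\setminus D$ are pairwise disjoint is false, and shrinking $\rho,\rho'$ does not rescue it. The problem occurs just outside $D=B(c,\rho)$. The set $U_{\overline i}=\inte\big(\conv(D\cup B(v_i,\rho'))\big)$ contains all of $D$, and its boundary near $c$ agrees with $\partial B(c,\rho)$ only on the hemisphere facing away from $v_i$; on the hemisphere facing $v_i$ the pill extends strictly beyond $D$. Concretely, for $d=2$ take a point $x$ at distance $\rho+\delta$ from $c$ along the bisector of $\angle v_icv_j$. Since that bisector makes angle $60^\circ<90^\circ$ with each of $cv_i$ and $cv_j$, the point $x$ lies in both $U_{\overline i}$ and $U_{\overline j}$ for all small $\delta>0$, yet $x\notin D=U_{\overline{d+2}}$. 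The codeword at $x$ then contains $\overline i$ and $\overline j$ but not $\overline{d+2}$, which is not a codeword of $\cP_d$. The same computation works for every $d\ge 2$ because the angle between $cv_i$ and $cv_j$ in a regular $d$-simplex is $\arccos(-1/d)<180^\circ$, so its half is always below $90^\circ$. Your intuition that thin tubes along distinct rays are disjoint is correct far from $c$, but it breaks down in the collar just outside the common ball.

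The paper's construction sidesteps exactly this issue by replacing your central ball $D$ with a small $d$-simplex $V$ whose facet outward normals are the directions $v_i-c$, and taking $V_i=V+\mathbb R_{\ge 0}(v_i-c)$. Because $\langle v_i-c,\,v_j-c\rangle<0$ for $i\ne j$, moving in direction $v_i-c$ only violates the $i$-th facet inequality of $V$, so $V_i\cap V_j=V$ exactly. The paper then verifies that this realization gives a subcode of $\cP_d$ with the correct maximal codewords, and invokes a ``shaving'' lemma to produce the remaining non-maximal codewords. If you want to salvage your approach, swapping the ball for such a simplex is the minimal change needed.
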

\begin{proof}
Note that every $d$-subset of $[d+1]$ appears in some codeword of $\cP_d$ but $[d+1]$ does not appear in any codeword. This implies that the receptive fields of the neurons $1,2,\ldots, d+1$ in any realization of $\cP_d$ will have a nerve that is the boundary of a $d$-simplex. This can only occur in dimension $d$ or higher (see \cite[Section 1.2]{tancer} or \cite{leray} for further details), and so $\odim(\cP_d)\ge d$. To prove that $\odim(\cP_d) \le d$, we must exhibit a realization of $\cP_d$ in $\R^d$. This is done in in Appendix \ref{sec:constructions} with Construction \ref{const:Pd}. When $d=2$, this construction is shown in Example \ref{ex:prism} below.
\end{proof}
\begin{ex}\label{ex:prism}
Figure \ref{fig:P2} shows a realization of \[\cP_2 = \{\mathbf{12\overline 3}, \mathbf{1\overline 2 3}, \mathbf{\overline 1 23}, \mathbf{\overline{1234}}, 12, 13, 23, 1\overline 2, 1\overline 3,  2\overline 1, 2\overline 3, 3 \overline 1 , 3 \overline 2 , 1, 2, 3, \overline 1, \overline 2, \overline 3, \emptyset\}\] in $\R^2$ as given by the proof of Proposition \ref{prop:Pddim}. The ``shaved off" regions arise at the rounded corners, realizing the codewords $12$, $13$, and $23$. Observe that we could not modify this realization to add the codeword $\{\overline 4\}$ since any extension of $V_4$ outside the central triangle would overlap $V_1$, $V_2$, or $V_3$. We will formalize this observation in Theorem \ref{thm:strictmonotonicity}.

\begin{figure}[h]\includegraphics[width=30em]{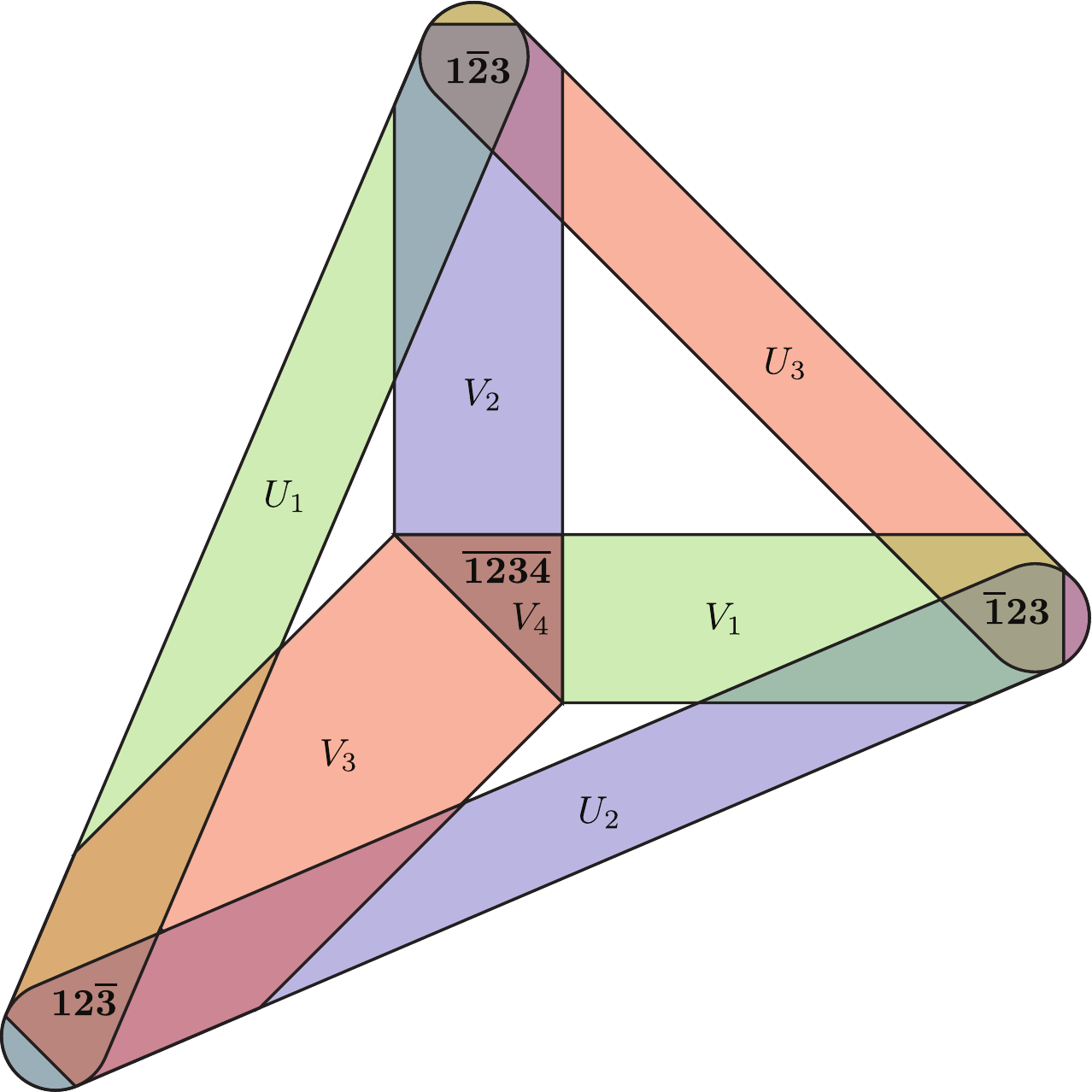}
\caption{A realization of $\cP_2$ in $\R^2$. }\label{fig:P2}
\end{figure}
\end{ex}

To prove our monotonicity result, we apply a recent geometric result on sunflowers of convex open sets, introduced below.

\begin{defn}
Let $\cU = \{U_1,\ldots, U_n\}$ be a collection of convex open sets in $\R^d$. We say that $\cU$ is a \emph{sunflower} if $U_i\cap U_j = \bigcap_{k\in[n]} U_k\neq \emptyset$ for all $i\neq j$. Equivalently, $\cU$ is a sunflower if $\code(\cU)$ contains the codeword $[n]$, and all other codewords contain no more than one neuron. When $\cU$ is a sunflower, the $U_i$ are called \emph{petals} and $\bigcap_{k\in[n]} U_i$ is called the \emph{center} of the sunflower.
\end{defn}

\begin{thm}[Corollary 2.1 in \cite{sunflowers}] \label{thm:sunflower}
Let $\cU = \{U_1,\ldots, U_{d+1}\}$ be a sunflower in $\R^d$, and for $i\in[d+1]$ choose $p_i\in U_i$. Then $\conv\{p_1,\ldots, p_{d+1}\}$ contains a point in the center of $\cU$.
\end{thm}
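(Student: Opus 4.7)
The plan is to prove the statement by contradiction: assume $\conv\{p_1,\ldots,p_{d+1}\}$ is disjoint from the center $C = \bigcap_i U_i$, and derive a contradiction via hyperplane separation followed by a dimension-reduction argument.

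First I would dispatch the base case $d=1$ by hand. There we have two convex open intervals $U_1, U_2 \subseteq \R$ with nonempty intersection, and points $p_1 \in U_1$, $p_2 \in U_2$. After ordering $p_1 \le p_2$ and picking any $c \in U_1 \cap U_2$, one splits into the cases $c < p_1$, $c \in [p_1,p_2]$, or $c > p_2$. In each case, convexity of the $U_i$ places either $c$, $p_1$, or $p_2$ in $U_1 \cap U_2 \cap [p_1,p_2]$, finishing the argument.

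For the inductive step, assume $\conv\{p_1,\ldots,p_{d+1}\} \cap C = \emptyset$. Since $\conv\{p_i\}$ is compact and $C$ is open convex (an intersection of open convex sets), strict hyperplane separation yields an affine hyperplane $H$ and a linear functional $\ell$ with $\ell(p_i) < 0$ for all $i$ and $\ell > 0$ on $C$. Fix any $c \in C$. For each $i$, the segment $[p_i,c]$ lies in $U_i$ by convexity and crosses $H$ at a unique point $q_i$, so $q_i \in V_i := U_i \cap H$. Because $U_i \cap U_j = C$ for $i \ne j$ and $C \cap H = \emptyset$, the $V_i$ are pairwise disjoint, nonempty, convex, and relatively open subsets of $H \cong \R^{d-1}$.

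The main obstacle is deriving a contradiction from this configuration of $d+1$ pairwise disjoint convex slices carrying the $q_i$. My first approach would be Radon's theorem in $\R^{d-1}$: the $d+1$ points $q_i$ admit a partition $[d+1] = A \sqcup B$ with $\conv(q_i : i \in A) \cap \conv(q_j : j \in B) \ne \emptyset$. Using $q_i = (1-t_i)p_i + t_i c$, a point in this intersection can be rewritten as a common convex combination of points drawn from $\conv(\{p_i : i \in A\} \cup \{c\})$ and $\conv(\{p_j : j \in B\} \cup \{c\})$, which I would try to leverage (together with the separation by $H$) to contradict the disjointness of the $V_i$. A second approach would be an open-cover KKM argument on the standard $d$-simplex mapped barycentrically into $\conv\{p_i\}$, with the open sets $f^{-1}(U_i)$; the vertex condition is immediate, and the face condition is precisely a lower-dimensional instance of the theorem, so induction on $d$ closes the argument. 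I expect the hardest step is arranging the combinatorial reduction so that the inductive hypothesis applies cleanly, since the natural restriction of the sunflower to a face-carrying affine subspace need not itself be a sunflower (its slice of $C$ may be empty); bridging that gap is the crux.
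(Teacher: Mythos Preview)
This paper does not prove Theorem~\ref{thm:sunflower}; it is quoted as Corollary~2.1 of \cite{sunflowers} and used as a black box in Section~\ref{sec:strict_mono}. There is therefore no in-paper proof to compare your proposal against.

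Regarding the proposal itself: you have honestly flagged the gap in the KKM route---restricting the sunflower to the affine span of a proper face of $\conv\{p_i\}$ need not yield a lower-dimensional sunflower, because $C$ may miss that span entirely, so the inductive hypothesis has nothing to act on. The Radon route has a parallel gap that you do not flag. After separating by $H$ you obtain $d{+}1$ points $q_i$, one in each of $d{+}1$ pairwise disjoint open convex sets $V_i\subseteq H\cong\R^{d-1}$, and Radon furnishes a partition $A\sqcup B$ with $\conv\{q_i:i\in A\}$ meeting $\conv\{q_j:j\in B\}$. But pairwise disjointness of the $V_i$ is in no tension with this: already when $d=2$, three disjoint open intervals on a line, each containing one of the $q_i$, happily allow the middle $q_i$ to lie between the outer two. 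Your rewriting $q_i=(1-t_i)p_i+t_ic$ does show that the Radon point lies on two segments to $c$, which forces $c$ to be collinear with the two face-points $x_A\in\conv\{p_i:i\in A\}$ and $x_B\in\conv\{p_j:j\in B\}$; however, neither $x_A$ nor $x_B$ need lie in any single $U_i$ (each is only a convex combination of points from distinct petals), so this collinearity does not by itself contradict the sunflower hypothesis or the assumption $\conv\{p_i\}\cap C=\emptyset$. In both routes the essential difficulty is the one you name at the end: passing to $H$ or to a face discards the sunflower structure, and you have not supplied a substitute hypothesis strong enough to run the induction.
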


Roughly, Theorem \ref{thm:sunflower} says that the petals of a sunflower $\cU$ are rigid in the sense that not too many can point in the same direction. 
This result is relevant to our family of codes $\cP_d$ because the receptive fields of neurons $\overline 1,\ldots, \overline{d+2}$ in a realization of $\cP_d$ will always form a sunflower. 
\begin{thm}\label{thm:strictmonotonicity}
Let $d\ge 1$. Then $\odim(\cP_d\cup \{\{\overline{d+2}\}\}) = d+1$. That is, adding a non-maximal codeword to $\cP_d$ may increase its open embedding dimension.
\end{thm}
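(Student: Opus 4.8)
The plan is to prove the upper bound $\odim(\cP_d\cup\{\{\overline{d+2}\}\})\le d+1$ quickly from monotonicity and then to spend all the effort on the matching lower bound, which is where Theorem~\ref{thm:sunflower} enters. For the upper bound: the codeword $\{\overline{d+2}\}$ is properly contained in the codeword $\{\overline 1,\ldots,\overline{d+2}\}\in\cP_d$, so it is non-maximal and $\cP_d$ and $\cP_d\cup\{\{\overline{d+2}\}\}$ have the same maximal codewords. Thus Theorem~\ref{thm:monotonicity} applies, and combined with Proposition~\ref{prop:Pddim} it gives $\odim(\cP_d\cup\{\{\overline{d+2}\}\})\le\odim(\cP_d)+1=d+1$.

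For the lower bound, suppose toward a contradiction that $\cU=\{V_1,\ldots,V_{d+1},\overline V_1,\ldots,\overline V_{d+2}\}$ is an open convex realization of $\cP_d\cup\{\{\overline{d+2}\}\}$ in $\R^d$. First I would record the structure of the barred fields: the only codeword of this code containing two or more barred neurons is $\{\overline 1,\ldots,\overline{d+2}\}$, so $\overline V_i\cap\overline V_j=\bigcap_{k=1}^{d+2}\overline V_k=:C$ for all $i\ne j$; in particular $\{\overline V_1,\ldots,\overline V_{d+2}\}$ is a sunflower with nonempty center $C=A^\cU_{\{\overline 1,\ldots,\overline{d+2}\}}$. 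Since $\{\overline 1,\ldots,\overline{d+2}\}$ and $\{\overline{d+2}\}$ are the only codewords containing $\overline{d+2}$, and neither contains an unbarred neuron, both $C$ and $\overline V_{d+2}$ are disjoint from $W:=\bigcup_{j=1}^{d+1}V_j$. Next I would choose the points that feed into the sunflower theorem: for each $i\in[d+1]$ pick $q_i$ in the atom of $\{1,\ldots,d+1,\overline i\}\setminus\{i\}$ (this atom is open, as the codeword is maximal), chosen so that $q_1,\ldots,q_{d+1}$ are affinely independent; then $q_i\in\overline V_i$ and $q_i\in V_j$ for every $j\in[d+1]\setminus\{i\}$. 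Also pick $r$ in the nonempty atom of $\{\overline{d+2}\}$, so that $r\in\overline V_{d+2}$, $r\notin W$, and $r\notin\overline V_i$ for $i\le d+1$.

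Now apply Theorem~\ref{thm:sunflower} to the sunflower $\{\overline V_1,\ldots,\overline V_{d+1}\}$ with the points $q_1,\ldots,q_{d+1}$: the $d$-simplex $\Sigma:=\conv\{q_1,\ldots,q_{d+1}\}$ contains a point $z\in C$. Each facet $\conv\{q_k\mid k\ne i\}$ of $\Sigma$ lies in $V_i$ (all its vertices do), so $\partial\Sigma\subseteq W$; since $z\in C$ is disjoint from $W$, the point $z$ lies in $\inte\Sigma$. The set $\overline V_{d+2}$ is convex, hence connected, contains both $z\in\inte\Sigma$ and $r$, and is disjoint from $W\supseteq\partial\Sigma$, so $\overline V_{d+2}\subseteq\inte\Sigma$; in particular $r\in\inte\Sigma$ and $C\subseteq\inte\Sigma$.

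The last step is the heart of the argument. If $r$ lay outside $\Sigma$ we would be finished at once, since the segment $\overline{zr}$, running from the interior of $\Sigma$ to its exterior, would have to meet $\partial\Sigma\subseteq W$, contradicting $\overline{zr}\subseteq\overline V_{d+2}$ and $\overline V_{d+2}\cap W=\emptyset$. So the real work is the case $r\in\inte\Sigma$, and here the plan is to squeeze $\overline V_{d+2}$ further: apply Theorem~\ref{thm:sunflower} again, this time to each sunflower $\{\overline V_k\mid k\in[d+1]\setminus\{i\}\}\cup\{\overline V_{d+2}\}$ (whose center is again $C$) using the points $\{q_k\mid k\ne i\}\cup\{r\}$, obtaining points $z_i\in C$ that lie strictly between a point of the facet $\conv\{q_k\mid k\ne i\}\subseteq V_i$ and $r$; since $C$ is convex and $r\notin C$, one gets $r\notin\conv\{z_1,\ldots,z_{d+1}\}$, and separating $r$ from that hull by a hyperplane — together with the fact that $r\in\inte\Sigma$ is ``surrounded'' by the facets of $\Sigma$ — should produce a line segment contained in $\overline V_{d+2}$ that is forced to cross $W$. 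I expect this final geometric step to be the main obstacle: it is exactly an instance of the ``convex hulls of more than two points'' strengthening of order-forcing flagged in Question~\ref{q:higherdimensional}, and making it work uniformly in all $d$ (rather than just $d=1$, where the separating-hyperplane bookkeeping already closes the argument) is the delicate part.
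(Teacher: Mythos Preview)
Your proof has a genuine gap at precisely the point you yourself flag as ``the main obstacle.'' You correctly establish the upper bound, set up the sunflower, apply Theorem~\ref{thm:sunflower} to get $z\in C\cap\Sigma$, and show (by connectedness, since $\partial\Sigma\subseteq W$ and $\overline V_{d+2}\cap W=\emptyset$) that $\overline V_{d+2}\subseteq\inte\Sigma$, hence $r\in\inte\Sigma$ and $C\subseteq\inte\Sigma$. But from there you never reach a contradiction: knowing $r\in\inte\Sigma$ and $r\notin C$ is perfectly consistent, so your proposed repeated applications of the sunflower theorem and the vague ``should produce a line segment contained in $\overline V_{d+2}$ that is forced to cross $W$'' do not close the argument. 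Note too that the sentence ``If $r$ lay outside $\Sigma$ we would be finished at once\ldots\ so the real work is the case $r\in\inte\Sigma$'' is muddled: you have just \emph{proved} $r\in\inte\Sigma$, so there is no case split --- you simply have not yet produced any contradiction.

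The paper fills this gap with a short supporting-hyperplane step that, in effect, proves the \emph{opposite} conclusion $r\notin\Sigma$, which together with your $r\in\inte\Sigma$ is the contradiction. Take a generic segment $L\subseteq\overline V_{d+2}$ from $r$ to a point of $C$, let $q$ be the point where $L$ leaves $C$, and let $H$ support $C$ at $q$. Since $q\in L\subseteq\overline V_{d+2}$ but $q\notin C$, one has $q\notin\overline V_i$ for every $i\le d+1$ (otherwise $q\in\overline V_i\cap\overline V_{d+2}=C$); yet $q\in\overline C\subseteq\overline{\overline V_i}$, so $q\in\partial\overline V_i$. Hence each $\overline V_i$ has a supporting hyperplane at $q$, which must agree with $H$ (using genericity of $q$), giving $\overline V_i\subseteq H^-$ and thus $q_i\in H^-$ for all $i\le d+1$, while $r\in H^+$. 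Therefore $r\notin\conv\{q_1,\ldots,q_{d+1}\}=\Sigma$. This one hyperplane replaces your entire proposed endgame with the $z_i$'s and works uniformly in every dimension.
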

\begin{proof}
Let $\cC = \cP_d\cup \{\{\overline{d+2}\}\}$. By Theorem \ref{thm:monotonicity} and Proposition \ref{prop:Pddim} we know that $\odim(\cC)\le d+1$. It remains to show that there is no realization of $\cC$ in $\R^d$.

Suppose for contradiction that there exists a realization $\{U_1,\ldots, U_{d+1}, V_1,\ldots, V_{d+2}\}$ of $\cC$ in $\R^d$, where $U_i$ corresponds to the neuron $i$ and $V_i$ corresponds to the neuron $\overline{i}$. Since the only codeword in $\cC$ containing $\overline i$ and $\overline j$ for $i\neq j$ is $\{\overline 1,\ldots, \overline{d+2}\}$, the sets $V_1,\ldots, V_{d+2}$ must form a sunflower in $\R^d$. Call its center $V$, and for each $i$ between $1$ and $d+1$ choose a point $p_i$ in the intersection $V_i\cap \bigcap_{j\neq i} U_j$. Since the region $V_i\cap \bigcap_{j\neq i} U_j$ is open, we may assume that the $p_i$ are in general position, so that their convex hull is a $d$-simplex.

Theorem \ref{thm:sunflower} tells us that $\conv\{p_1,\ldots, p_{d+1}\}$ contains a point in $V$. In fact, we claim that this convex hull contains the entirety of $V$. If not, then $V$ would have to cross one of the facets of the simplex $\conv\{p_1,\ldots, p_{d+1}\}$ (since we are working in $\R^d$). By choice of $p_i$ each of these facets is contained in some $U_j$. Since $V$ is disjoint from all $U_j$, it cannot cross any of these facets. Thus $V$ is contained in $\conv\{p_1,\ldots, p_{d+1}\}$.

 Since $\{d+2\}$ is a codeword in $\cC$, we may choose a point $p\in V_{d+2}\setminus V$, and examine a generic line segment $L$ from $p$ to a point in $V$. Let $H$ be a hyperplane supporting $V$ at the point where $L$ crosses the boundary of $V$. Observe that $H$ separates $p$ from $V$, and moreover no $p_i$ may lie on the same side of $H$ as $p$, otherwise $V_i$ and $V_{d+2}$ would meet outside of $V$, violating the definition of a sunflower. Thus $H$ separates $p$ from all $p_i$, and $p$ lies outside $\conv\{p_1,\ldots, p_{d+1}\}$.

Since $V$ is contained in $\conv\{p_1,\ldots, p_{d+1}\}$, the line $L$ crosses the boundary of this simplex. Since $L$ is contained in $V_{d+2}$ this implies that $V_{d+2}$ intersects some $U_i$. But the only codewords of $\cC$ containing $\overline{d+2}$ are $\{\overline{d+2}\}$ and $\{\overline{1},\overline{2},\ldots, \overline{d+2}\}$, so this is a contradiction. Thus $\cC$ is not convex in $\R^d$ and the result follows.
\end{proof}

\section{Conclusion and Open Questions}\label{sec:conclusion}

Past work constructing non-convex codes has used notions that are similar to, but distinct from, order-forcing. For example, sunflower theorems such as \cite[Theorem 1.1]{sunflowers} and \cite[Theorem 1.11]{embeddingphenomena} were used to show that the convex hull of points sampled from certain atoms in a convex realization must intersect another atom. Likewise, \cite{CUR} used collapses of simplicial complexes to prove that in certain codes the convex hull of appropriately chosen points must intersect certain atoms.

Order-forcing brings a new perspective to this general approach: not only must certain atoms appear, but they must appear in a certain arrangement (i.e. in a particular order along a line segment). The order of points on a line may be generalized to higher dimensions by examining the ``order type" of a point configuration \cite{ordertype}. We thus ask the following.

\begin{question}\label{q:higherdimensional}
Does there exist a general result connecting the combinatorial structure of a code $\cC$ to the order type of points chosen from certain atoms in any convex realization of $\cC$? Can such a result be formulated so that the connections between convex codes and sunflower theorems \cite{sunflowers,embeddingphenomena}, convex union representable complexes \cite{CUR}, or oriented matroids \cite{matroids} are special cases?
\end{question}

A cleanly formulated answer to Question \ref{q:higherdimensional} would allow us to create fundamentally new families of non-convex codes.

To connect the combinatorics of order-forcing with the geometry of convex realizations, we examined straight line segments between different atoms. One could try to replace convex realizations by good cover realizations, and straight lines by continuous paths, which leads to the following question.

\begin{question}
If $\cC$ is a good cover code, are there feasible paths between all pairs of codewords in $\cC$?
\end{question}

Our examples have used order-forcing to prove that codes are not convex. However, even if a code is convex, one might hope to use order-forcing to bound its open or closed embedding dimension. 

\begin{question}
Can one use order-forcing to provide new lower bounds on the open or closed embedding dimension of codes?
\end{question}

Morphisms and minors of codes have played a role in characterizing ``minimal" obstructions to convexity, contextualizing results, and systematizing the study of convex codes \cite{morphisms,embeddingphenomena}. It would be interesting to phrase our results in this framework.

\begin{question}
How does order-forcing interact with code morphisms and minors? If $f:\cC\to \cD$ is a morphism, and $\sigma_1,\sigma_2,\ldots,\sigma_k$ is an order-forced sequence in $\cC$, under what conditions is $f(\sigma_1),f(\sigma_2),\ldots, f(\sigma_k)$ order-forced in $\cD$? Similarly, if $f$ is surjective and $\tau_1,\tau_2,\ldots, \tau_k$ is order-forced in $\cD$, when can we find $\sigma_1,\ldots, \sigma_k$ order-forced in $\cC$ with $f(\sigma_i) = \tau_i$ (i.e., when can we ``pull back" an order-forced sequence)?
\end{question}

Work in \cite{matroids} used minors of codes to tie the study of convex codes to the study of oriented matroids, in particular showing that non-convex codes come in two types: those that are minors of non-representable oriented matroid codes, and those that are not minors of any oriented matroid code. Concretely, it would be useful to understand which of these classes our codes $\cT$ and $\cR$ fall into.

\begin{question}
Are the codes $\cT$ and $\cR$ from Section \ref{sec:of_examples} minors of oriented matroid codes?  
\end{question}

\section{Acknowledgements}
The authors are grateful to Anne Shiu for detailed comments on an earlier draft. CL and NY are grateful to Isabella Novik for inviting them on a research visit to the University of Washington, where this project began. CL was supported by the NSF (DGE-1255832). RAJ was supported by the NSF (DGE-1761124).
\appendix
\section{Constructions of Various Realizations}\label{sec:constructions}
\begin{const}
\label{const:minimal}
In order to check that $\cL_n$ is minimally non-convex for all $n$, we must show that all codes covered by $\cL_n$ in $\pcode$ are convex. For this, we need the following characterization, from \cite{sunflowers}, of the covering relations in $\pcode$. 

\begin{defn}[Definition 3.9 of \cite{sunflowers}]\label{def:covered}
Let $\cC\subseteq 2^{[n]}$ be a code, let $i\in[n]$, and let $\sigma = [n]\setminus \{i\}$. Consider the morphism $f_i:\cC\to 2^{\sigma\cup\overline\sigma}$ defined by \[
f(c) = \begin{cases} c\cap\sigma & i\notin c,\\
c\cap\sigma \cup (\overline{c\cap\sigma}) & i\in c.\end{cases}
\]
The \emph{$i$-th covered code} of $\cC$ is the image of $\cC$ under $f_i$, and is denoted $\cC^{(i)}$.
\end{defn}

Importantly, if a code $\cD$ is covered by $\cC$ in $\pcode$, then $\cD$ must be one of the covered codes described above. Thus to prove that a non-convex code $\cC$ is minimally non-convex, it suffices to prove that all of its covered codes are convex. 

A useful geometric interpretation of covered codes is as follows. Suppose that $\cU = \{U_1,\ldots, U_n\}$ is a (possibly not convex) realization of $\cC$. Then we may obtain a realization of $\cC^{(i)}$ by deleting $U_i$ from $\cU$, and adding sets $U_{\overline j} = U_i\cap U_j$ for all $j\neq i$.

In some cases, there may be distinct neurons $j, k$ such that $U_{\bar j }= U_{\bar k}$. In this cases, one of the neurons $\bar j, \bar k$ is  redundant, and we can remove it from the code without discarding geometric information. More generally, a neuron $j$ is \emph{redundant}  to a set $\sigma \subseteq [n]\setminus \{j\}$ if $\tk_\cC(j) = \tk_\cC(\sigma)$, and a neuron is \emph{trivial} if it does not appear in any codeword \cite{morphisms}. A code is \emph{reduced} if it does not have any trivial or redundant neurons. Theorem 1.4 of \cite{morphisms} states that a code is always isomorphic to a reduced code. Thus, convexity of the reduced code is equivalent to convexity of the original code. Thus, we can ``clean up" $\cC^{(i)}$ by removing all trivial or redundant neurons. In what follows, we give realizations for reduced versions of all codes mentioned. 

Thus, to show that $\cL_n$ is minimal for all $n$, we need to construct realizations for each covered code $\cL_n^{(i)}$. In Figure \ref{fig:const123}, we construct realizations of $\cL_n^{(1)}$, $\cL_n^{(2)}$, and $\cL_n^{(3)}$ in $\R^2$. In Figure \ref{fig:const4}, we construct a realization of $\cL_n^{(4)}$ in $\R^3$. Finally, in Figure \ref{fig:const56}, we construct a convex realization of $\cL_n^{(7)}$ in $\R^3.$ An analogous process can be used to construct convex realizations of $\cL_n^{(8)}, \ldots,  \cL_n^{(n+6)}.$

\begin{figure}
    \centering
    \includegraphics[width = 4 in]{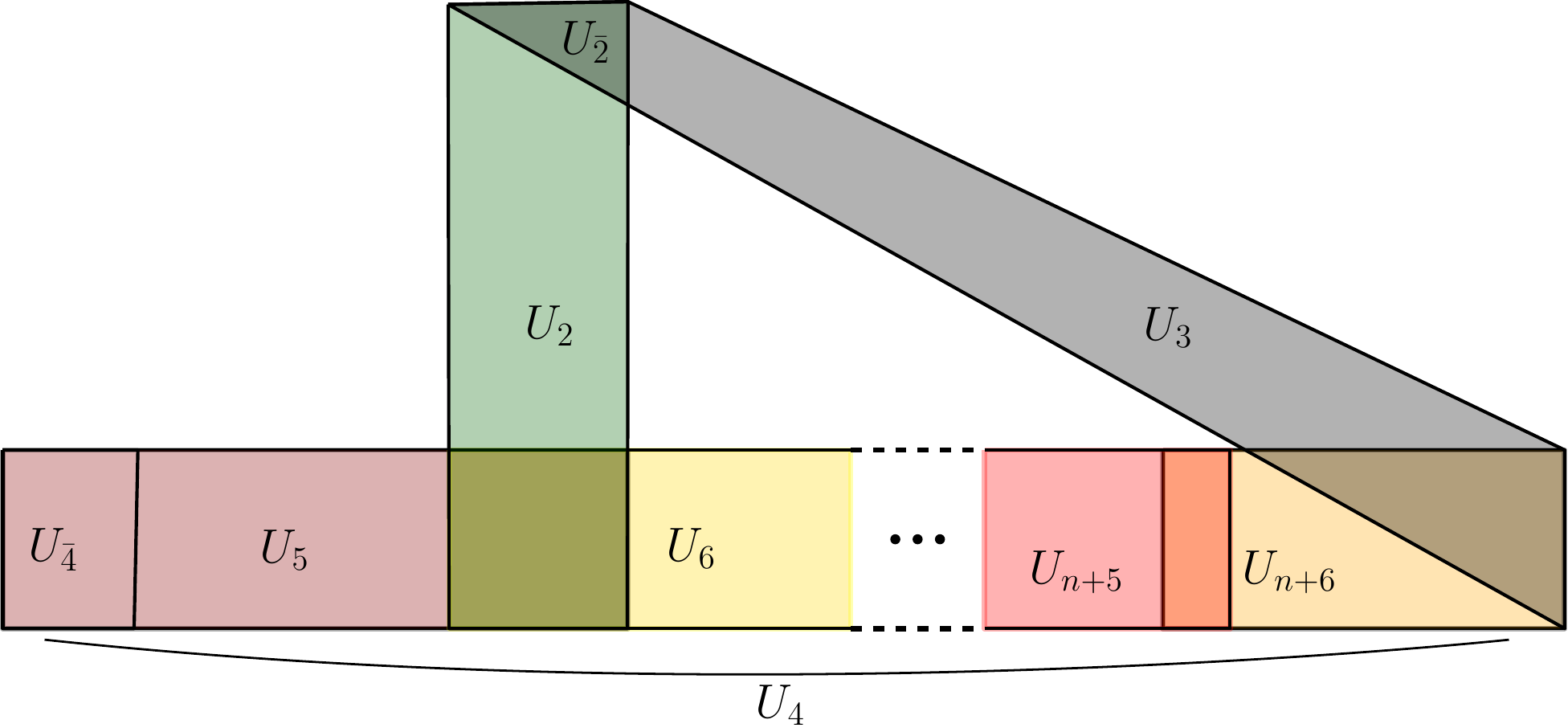}
    \includegraphics[width = 4 in]{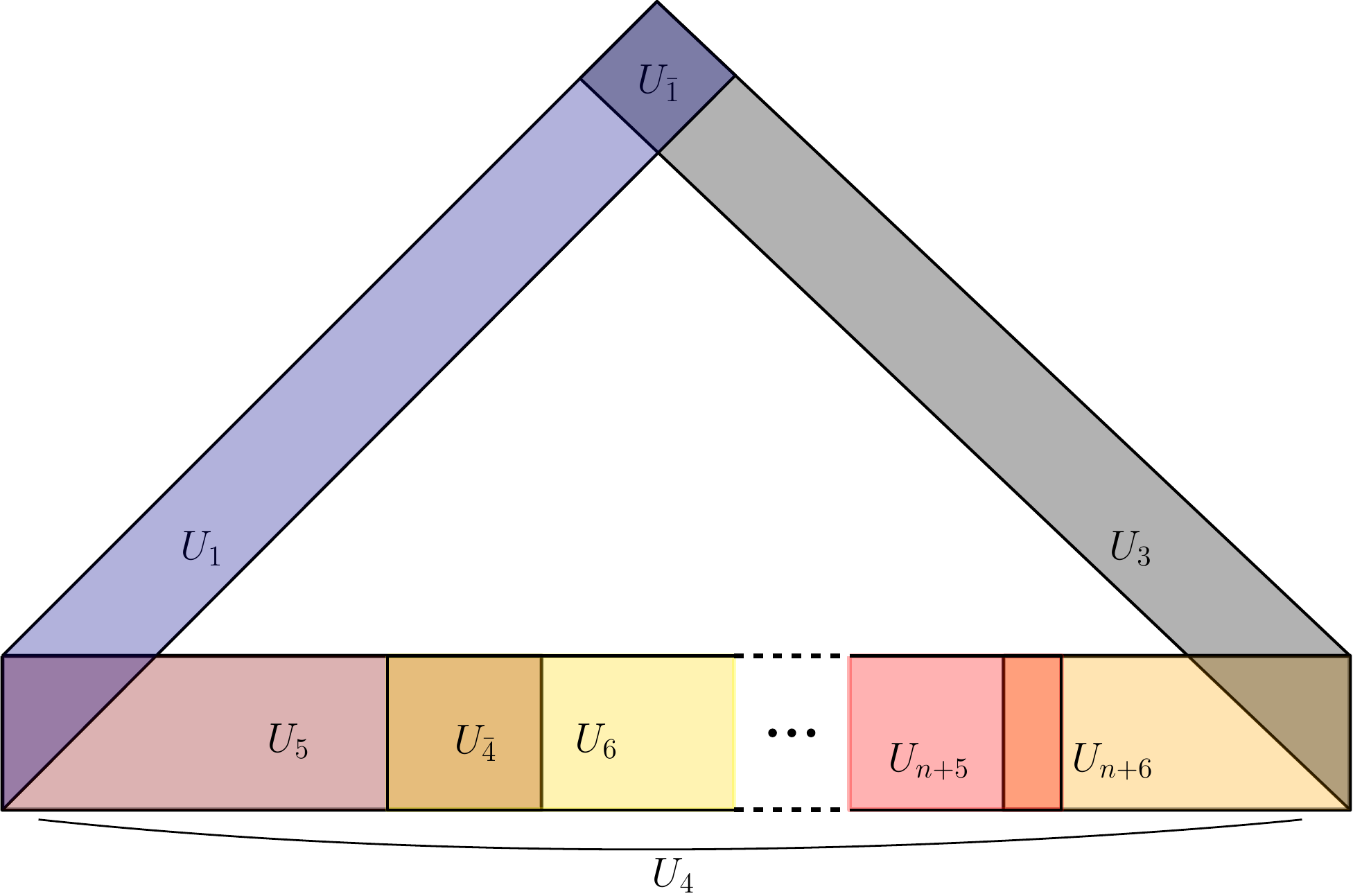}
    \includegraphics[width = 4 in]{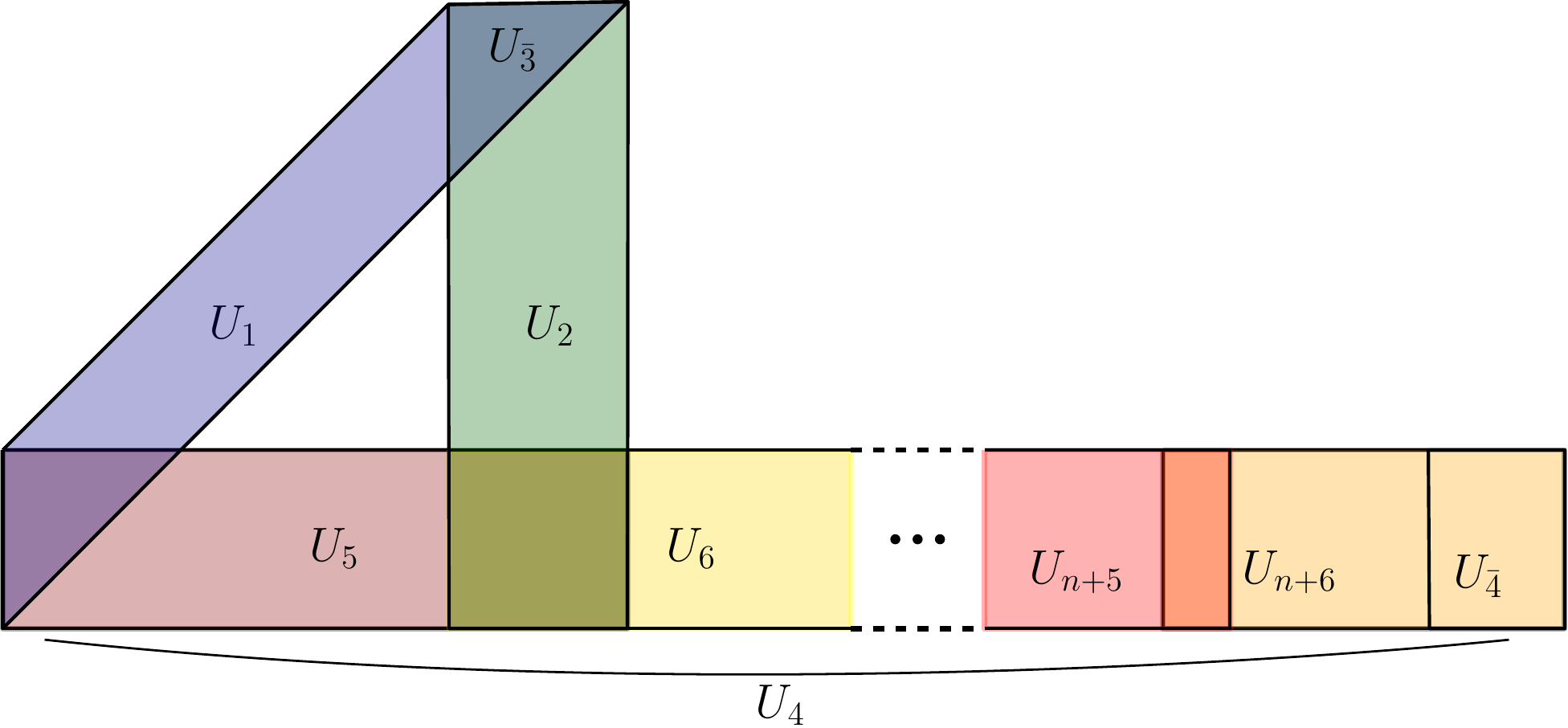}
    \caption{Convex realizations in $\R^2$ of the codes $\cL_{n}^{(1)}$, $\cL_{n}^{(2)}$, $\cL_{n}^{(3)}$.}
    \label{fig:const123}
\end{figure}

\begin{figure}
    \centering
    \includegraphics{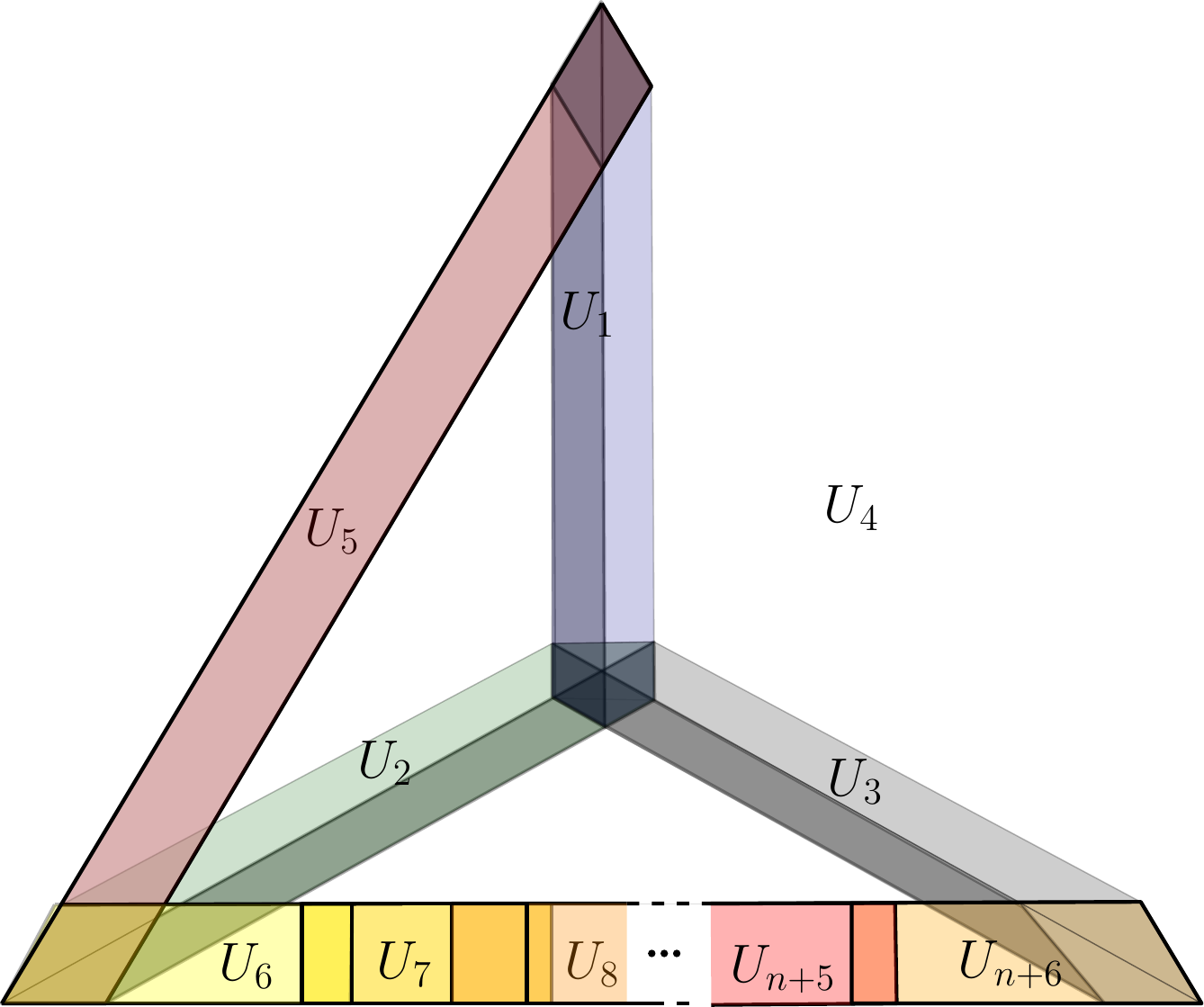}
    \caption{A convex realization in $\R^3$ of the code $\cL_n^{(7)}$. An analogous construction can be used to construct convex realizations for $\cL_n^{(5)}$, 
    $\cL_n^{(6)}$, and $\cL_{n}^{(8)}, \ldots, \cL_{n}^{(n+6)}$ in $\R^3$. }
    \label{fig:const4}
\end{figure}

\begin{figure}
    \centering
    \includegraphics{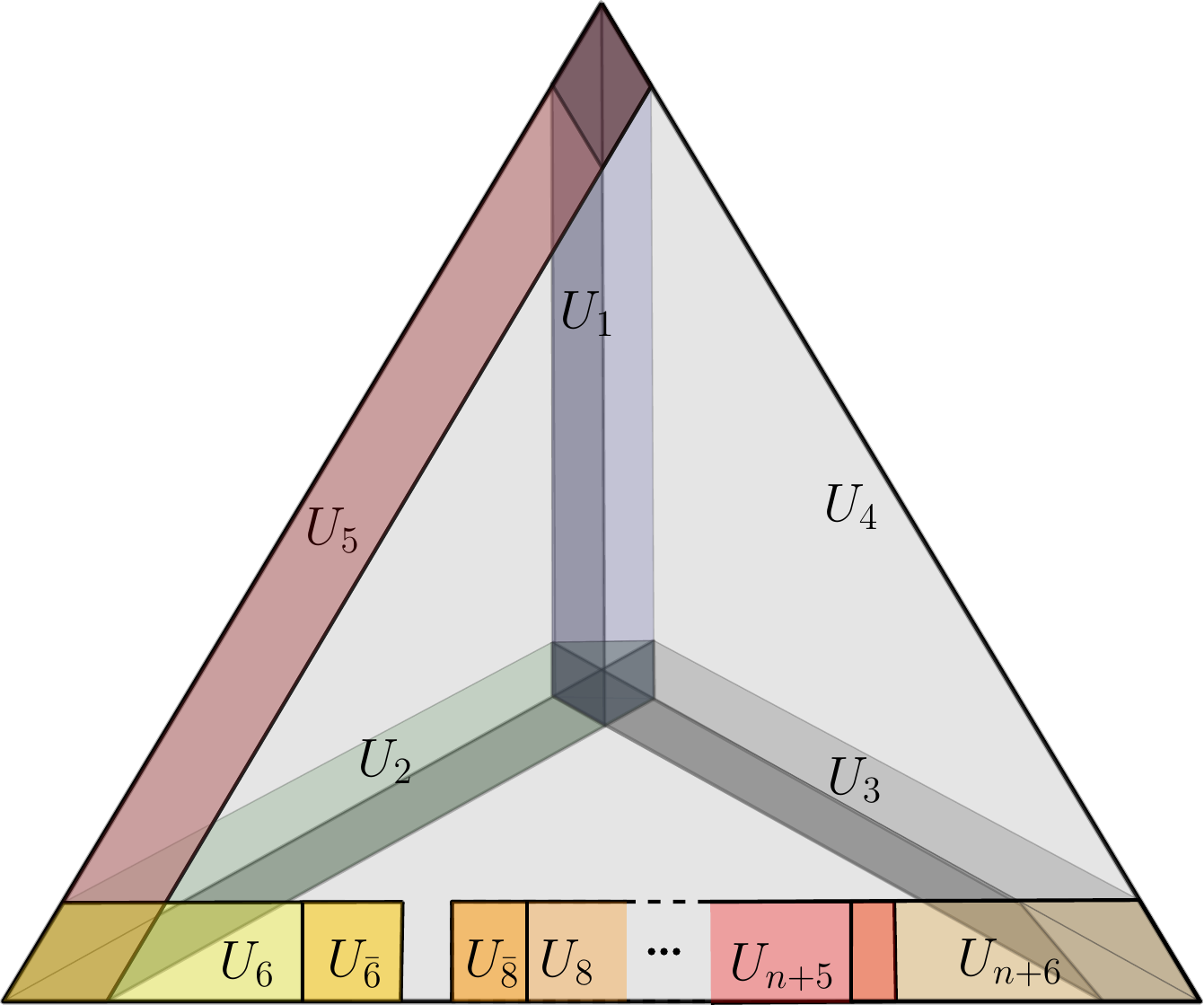}
    \caption{A convex realization in $\R^3$ of the code $\cL_n^{(7)}$. An analogous construction can be used to construct convex realizations for $\cL_n^{(5)}$, 
    $\cL_n^{(6)}$, and $\cL_{n}^{(8)}, \ldots, \cL_{n}^{(n+6)}$ in $\R^3$. }
    \label{fig:const56}
\end{figure}

\end{const}

\begin{const}\label{const:Pd}
The code $\cP_d$ of Definition \ref{def:Pd} has an open convex realization in $\R^d$.
\end{const}
\begin{proof}
 First choose points $p_1,\ldots, p_d$ where $p_i = e_i$ in $\R^d$. Also choose $p_{d+1} = -\mathbf{1}$, the vector whose entries are all $-1$. For $i\in [d+1]$ define $F_i$ to be the facet $\conv\{p_j\mid j\neq i\}$ of the $d$-simplex $\conv\{p_1,\ldots, p_{d+1}\}$, and let $U_i$ to be the Minkowski sum of $F_i$ with a small ball of radius $\varepsilon$. Choose a small $d$-simplex with center of mass at the origin and facet normal vectors equal to the various $p_i$, and let $V$ denote its interior. For $i\in[d+1]$ define $V_i$ to be Minkowski sum of $V$ with a ray in the direction of $p_i$. Lastly, define $V_{d+2}$ to be equal to $V$. 

Observe that we may choose $V$ small enough that its closure is contained in the interior of $\conv\{p_1,\ldots, p_{d+1}\}$. We may then choose $\varepsilon$ small enough that the various $U_i$ do not intersect $V$. Let $\cD$ denote the code arising from this realization. We claim that $\cD$ has the same maximal codewords as $\cP_d$, and that $\cD\subseteq \cP_d$.

Let us first determine the maximal codewords that arise in $\cD$. One maximal codeword is $\{\overline 1,\ldots, \overline{d+2}\}$, which arises only inside $V$. The codeword $\{1,2,\ldots, d+1,\overline{i}\}\setminus \{i\}$ arises in a small neighborhood of the point $p_i$, and it is maximal since this neighborhood can be separated from $U_i$ and all $V_j$ with $j\neq i$ by a hyperplane with normal vector equal to $p_i$. This shows that the maximal codewords of $\cP_d$ arise as maximal codewords in $\cD$.

We must argue that no other maximal codewords arise. Clearly the only maximal codeword containing $\overline{d+2}$ is $\{\overline 1,\ldots, \overline{d+2}\}$ since $V = V_{d+2}$ is disjoint from all $U_i$. The other possibilities are a maximal codeword that contains $[d+1]$ or a maximal codeword that contains $\{i,\overline i\}$ for some $i\in [d+1]$. The former is impossible since we have chosen $\varepsilon$ small enough that various $U_i$ do not intersect $V$ and thus do not all meet at a single point. The latter is impossible because $V_i$ and $U_i$ are separated by a hyperplane parallel to $F_i$. Thus the maximal codewords arising in $\cD$ are exactly those in $\cP_d$.

We next show that the non-maximal codewords in $\cD$ are codewords in $\cP_d$. First let us consider the codewords of $\cD$ that do not contain any $i\in[d+1]$. By construction the various $V_i$ only overlap inside $V$, and so the only codewords of this type in $\cD$ are the singleton codewords $\{\overline{i}\}$ for $i\in[d+1]$, which arise near the face of $V$ with normal vector $p_i$ (and $V_{d+2} = V$, so $\{\overline{d+2}\}$ does not arise as a singleton codeword). Any other codeword of $\cD$ contains a neuron $i\in[d+1]$ and is thus contained in some maximal codeword $\{1,2,\ldots, d+1,\overline i\}\setminus \{i\}$. But $\cP_d$ contains all subsets of $\{1,2,\ldots, d+1,\overline i\}\setminus \{i\}$ and so we conclude that $\cD\subseteq \cP_d$.

We now modify our realization $\{U_1,\ldots, U_{d+1}, V_1,\ldots, V_{d+2}\}$ to obtain a realization of $\cP_d$ by applying techniques of \cite[Section 5.4]{openclosed}. Since we are working in $\R^d$ we may choose an open ball $B$ whose boundary contains every $p_i$. Let us replace every set in our realization by its intersection with $B$. The resulting code is still contained in $\cP_d$, and since $B$ contains the interior of $\conv\{p_1,\ldots, p_{d+1}\}$ it still has the same maximal codewords as $\cP_d$. Moreover, the regions in our realization corresponding to the maximal codewords  $\{1,2,\ldots, d+1,\overline i\}\setminus \{i\}$ now have closures which intersect the boundary $\partial B$ of $B$ in a relatively open subset of $\partial B$. In the proof technique of \cite[Lemma 5.7]{openclosed} implies that we may repeatedly ``shave off" pieces of our sets near this region to add the desired non-maximal codewords contained in $\{1,2,\ldots, d+1,\overline i\}\setminus \{i\}$, obtaining a realization of $\cP_d$ in $\R^d$.
\end{proof}

\newpage
\bibliography{neuralcodereferences.bib}

\end{document}